\documentclass[10pt,twoside,a4paper]{amsart}

\usepackage[english]{babel}
\usepackage[utf8]{inputenc}

\usepackage{amsfonts,amsthm,amssymb,amsmath,amsthm,latexsym,wasysym,stmaryrd,mathrsfs,dsfont,txfonts,xcolor}
\usepackage{accents,multirow,rotating,subfigure,graphicx,bigstrut,lscape,multicol,enumerate,accents,mathtools,exscale}

\usepackage[normalem]{ulem}

\usepackage{pdftricks}
\usepackage{color}
\usepackage[pdftex,all]{xy}

\usepackage{hyperref}
\usepackage{appendix}
\usepackage[justification=centering]{caption}

\graphicspath{{Figures/}}

\numberwithin{equation}{section}

\theoremstyle{theorem}

\newtheorem {theo}{Theorem}[section]
\newtheorem*{theo*}{Theorem}
\newtheorem {lem}[theo]{Lemma}
\newtheorem*{lem*}{Lemma}
\newtheorem {prop}[theo]{Proposition}
\newtheorem*{prop*}{Proposition}
\newtheorem {cor}[theo]{Corollary}
\newtheorem*{cor*}{Corollary}

\newtheorem*{cor_proof*}{Corollary (of the proof)}

\newtheorem*{conjecture*}{Conjecture}
\theoremstyle{definition}
\newtheorem {defi}[theo]{Definition}
\newtheorem*{defi*}{Definition}

\newtheorem*{nota*}{Notation}
\theoremstyle{remark}
\newtheorem {rem}[theo]{Remark}
\newtheorem*{rem*}{Remark}

\newtheorem*{warning*}{Warning}

\newtheorem {convention}[theo]{Convention}
\newtheorem*{convention*}{Convention}

\newtheorem*{ex*}{Example}

\newtheorem*{question*}{Question}

\newtheorem*{questions*}{Questions}

\newtheorem*{fact*}{Fact}
\newtheorem{claim}[theo]{Claim}

\def\C{{\mathcal C}}
\def\CC{{\mathcal C}}

\def\R{{\mathds R}}

\def\UU{\mathcal C}

\def\Z{{\mathds Z}}

\def\2Z{{\fract{\Z}{2\Z}}}

\def\e{\varepsilon}
\def\p{\partial}

\newcommand{\fract}[2]{\hbox{\leavevmode 
\kern.1em \raise .25ex \hbox{\the\scriptfont0 $#1$}\kern-.1em }\big/
  {\hbox{\kern-.15em \lower .5ex \hbox{\the\scriptfont0 $#2$}} }}
\newcommand{\subfract}[2]{\hbox{\leavevmode
  \kern0em \raise .25ex \hbox{\the\scriptfont0 \tiny $#1$}\kern-.1em }/
  {\hbox{\kern-.15em \lower .5ex \hbox{\the\scriptfont0 \tiny $#2$}} }}

\newcommand{\dessin}[2]{
  \vcenter{\hbox{\includegraphics[height=#1]{#2.pdf}}}}

\def\nR{\textnormal R}

\definecolor{pink}{rgb}{0.858, 0.188, 0.478}
\definecolor{orange}{rgb}{1, 0.647, 0}
\definecolor{vert}{rgb}{0.42, 0.557, 0.137}

\begin{document} 

\title{Combinatorial link concordance using cut-diagrams} 
\author[B. Audoux]{Benjamin Audoux}
         \address{Aix Marseille Univ, CNRS, Centrale Marseille, I2M, Marseille, France}
         \email{benjamin.audoux@univ-amu.fr}
\author[J.B. Meilhan]{Jean-Baptiste Meilhan} 
\address{Univ. Grenoble Alpes, CNRS, Institut Fourier, F-38000 Grenoble, France}
	 \email{jean-baptiste.meilhan@univ-grenoble-alpes.fr}
\author[A. Yasuhara]{Akira Yasuhara} 
\address{Faculty of Commerce, Waseda University, 1-6-1 Nishi-Waseda,
  Shinjuku-ku, Tokyo 169-8050, Japan}
	 \email{yasuhara@waseda.jp}
\subjclass[2000]{Primary: 57K45, Secondary: 57K12}
\begin{abstract} 
Cut-diagrams are diagrammatic objects, defined in dimensions $1$ and $2$, that generalize links in $3$-space and surface-links in $4$-space; in dimension $1$, this coincides with the theory of welded links. 
Using cut-diagrams, we introduce an equivalence relation called cut-concordance, which encompasses the topological notion of concordance for classical links. Our main result is that the nilpotent peripheral  system of $1$--dimensional cut-diagrams is an invariant of cut-concordance, giving along the way a combinatorial version of a theorem of Stallings. 
We also investigate the relationship with several other equivalence relations in diagrammatic knot theory, in particular in connection with link-homotopy.
\end{abstract} 

\maketitle

\section*{Introduction}
A central feature of knot theory is its combinatorial nature: knots and links can be represented by planar diagrams, and ambient isotopy corresponds to a finite set of local (Reidemeister) moves. This diagrammatic viewpoint has multiple advantages, and led in particular to a variety of combinatorial extensions of classical knot theory, among which welded knot theory. 
Welded links arise indeed as a generalization of (classical) link diagrams where one allows virtual crossings in addition to classical ones, subject to specific local moves; quite remarkably, it has  provided in recent years a significant number of applications in both $3$--dimensional \cite{Graff,Colombari,AM,AMY_k} and $4$--dimensional topology \cite{ABMW,arrow,AMY_w}. 

Welded theory can be equivalently reformulated using the notion of \emph{cut-diagrams} introduced in \cite{AMY}. 
But the notion of cut-diagrams also exist in dimension $2$, where they extend surfaces embedded in $4$--space. In both settings, cut-diagrams encode and generalize combinatorially the topology of embedded objects, and important algebraic invariants such as the peripheral system. Cut-diagrams hence provide a convenient framework for studying classical topological phenomena using purely combinatorial tools.

Beyond ambient isotopy, another fundamental equivalence relation in knot theory is \emph{concordance}. Two links are concordant if they cobound a collection of disjoint, smoothly embedded annuli in $S^3 \times [0,1]$. 
The aim of the present paper is to introduce and study a diagrammatic notion of concordance for cut-diagrams, which we call \emph{cut-concordance}. Roughly speaking, a $2$--dimensional cut-diagram may interpolate between two $1$--dimensional cut-diagrams on its boundary components, providing a combinatorial analogue of a concordance. 
In particular, a concordance between two links induces a cut-concordance between the cut-diagrams associated to both links; in this sense, cut-concordance extends classical concordance, while remaining entirely within the combinatorial framework of cut-diagrams.

We observe that, in strong contrast with the classical case, any knot is \lq slice\rq\, in this  setting, i.e. is cut-concordant to the unknot (Lemma \ref{lem:cutslice}). 
We next prove our main invariance result: 
\begin{theo*}[Thm.~\ref{th:NilpotentConcordance}]
  Equivalence classes of nilpotent peripheral systems for $1$--dimensional cut-diagrams are invariant under cut-concordance.
\end{theo*}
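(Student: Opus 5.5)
The plan is to reproduce, combinatorially, the classical argument that nilpotent quotients of link groups are concordance invariants, namely Stallings' theorem. Let $\mathcal{C}$ be a $2$--dimensional cut-diagram realizing a cut-concordance between $1$--dimensional cut-diagrams $D_0$ and $D_1$. The cut-diagram formalism supplies Wirtinger-type presentations from the combinatorial data:
\begin{itemize}
\item $\pi_1(D_i)$ has one generator per arc (region) of $D_i$ and one relation per cut;
\item $\pi_1(\mathcal{C})$ has one generator per region of the $2$--dimensional diagram and one conjugation relation per codimension-one cut.
\end{itemize}
Restricting to the boundary gives natural maps $\iota_i\colon \pi_1(D_i)\to \pi_1(\mathcal{C})$. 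They send meridians to meridians, and longitudes of $D_i$ to elements that commute with the image of the corresponding meridian.

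\textbf{Step 1: homology.} I first show that each $\iota_i$ induces an isomorphism on $H_1$ and a surjection on $H_2$. On $H_1$, both groups abelianize to $\Z^n$, generated by meridians, one per component. This holds because every region of an annulus component is related to the boundary regions through a chain of cut relations, which are conjugations. Hence $\iota_i$ is an isomorphism on $H_1$.

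For $H_2$, I use the cellular $2$--complexes $X_{D_i}$ and $X_{\mathcal{C}}$ built from the presentations. There are two options:
\begin{itemize}
\item compute $H_2(X_{\mathcal{C}}, X_{D_i})=0$ directly, via a collapsing argument;
\item exploit the fact that the relations of $\mathcal{C}$ away from the boundary come in cycles around the codimension-two singular points. Each such cycle contributes a $2$--cycle that is killed by a $3$--cell, namely the triple-point or branch-type local relations.
\end{itemize}
It suffices to know that $H_2(X_{\mathcal{C}})$ is generated by classes coming from $X_{D_i}$ together with null-homologous local contributions. An alternative is to prove that $\mathcal{C}$ can be decomposed, by combinatorial "Morse theory" on the $[0,1]$ direction, into elementary moves: welded Reidemeister moves, births and deaths of cut circles, and saddles. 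I would then check that each move preserves lower central series quotients.

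\textbf{Step 2: Stallings.} Stallings' theorem then gives isomorphisms
\[
\pi_1(D_0)/\Gamma_k \;\cong\; \pi_1(\mathcal{C})/\Gamma_k \;\cong\; \pi_1(D_1)/\Gamma_k
\]
for every $k$. These preserve meridians up to conjugation, since both are images of the same regions of each annulus component. For longitudes, the longitude of component $j$ of $D_0$ and the longitude of component $j$ of $D_1$ become equal in $\pi_1(\mathcal{C})$ up to conjugation and multiplication by a power of the meridian. I would prove this by following a path across the $j$-th annulus from one boundary circle to the other, reading off the cut labels crossed; this gives a combinatorial analogue of the boundary-parallel arc. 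The framing correction is handled by the normalization built into the definition of the peripheral system's equivalence. This yields equivalence of nilpotent peripheral systems.

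\textbf{Main obstacle.} The main difficulty will be the $H_2$ surjectivity in Step 1, since no topological $4$--manifold is available. My first attempt will be the elementary-move decomposition, checking that saddles and births only add generators and relations of the form "new generator $=$ conjugate of old". Such additions induce isomorphisms on all lower central series quotients. If a generic $2$--dimensional cut-diagram does not admit such a decomposition, I will fall back on the direct $H_2$ computation via the cell complex.
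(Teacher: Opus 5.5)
Your strategy is viable and genuinely different from the paper's. The Orientation and Labeling rules of Definition~\ref{def:kut} do make $\phi_\e$ induce homomorphisms $G(\UU_\e)\to G(\UU)$ that are isomorphisms on $H_1$, and Stallings' theorem would then give the nilpotent isomorphisms at once. The paper avoids homology altogether, in effect reproving Stallings combinatorially: after the duplication process it pushes the canonical road network of $\UU_0$ into $\UU$, so that Theorem~\ref{thm:alaChen} yields literally the same presentation of $N_qG$ for $\UU_0$ and for $\UU$.

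However, the step you flag as the crux, surjectivity on $H_2$, is not established, and two of your three routes fail.
\begin{itemize}
\item The claim $H_2(X_\UU,X_{D_0})=0$ is false for the presentation complexes $X_{D_0}, X_\UU$. Take the cut-concordance of Lemma~\ref{lem:cutslice} from a trefoil cut-diagram to the empty one. There $X_\UU$ has one generator $R$ and three freely trivial relators $R^{-1}R^{-1}RR$ (one per cut-arc), so $H_2(X_\UU)\cong\Z^3$, while the image of $H_2(X_{D_0})\cong\Z$ has rank one.
\item The Morse-type decomposition is not available. A level $X\times\{t\}$ of a $2$--dimensional cut-diagram is not a $1$--dimensional cut-diagram, because its labels are regions of $\UU$, which may meet that level in several arcs or not at all.
\item If births and saddles only added generators ``new $=$ conjugate of old'', these would be Tietze moves and $G(\UU_0)\to G(\UU)$ would be an isomorphism. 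In the same example it is the abelianization of the trefoil group.
\end{itemize}

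Your second option is the right one, but it has to be carried out, and this is exactly where the labeling rules of Definition~\ref{def:cut} are used. Write $\Sigma=X\times[0,1]$.
\begin{itemize}
\item $H_2(X_\UU)$ is the cycle space of the graph $\Gamma$ whose vertices are regions and whose edges are segments of cut-arcs between vertices of the diagram. Let $V$ be the set of crossings and univalent vertices. A Mayer--Vietoris argument shows that signed intersection with segments maps $H_1(\Sigma\setminus V)$ onto $H_1(\Gamma)$. Hence $H_1(\Gamma)$ is generated by the cores of the annulus components, whose images are those of the generators of $H_2(X_{D_0})$, and by small loops around points of $V$.
\item These local cycles are not null-homologous in $X_\UU$, but they are spherical, hence vanish in $H_2(G(\UU))$ by Hopf's theorem. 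At a univalent vertex, rule~2 makes the relator $A^{-1}A^{-1}AA$ freely trivial. At a crossing, the four relators around it form an identity among relations together with the relator $B^{-1}C^{-1}AC$ supplied by rule~1, which occurs twice with opposite signs. No ``triple-point $3$-cell'' is given to you; you must produce these identities.
\end{itemize}

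Finally, your longitude step is misjustified. Definition~\ref{def:pereiferic} allows no meridian-power slack: it requires conjugation by the same $g_i$ as for the meridian, and no conjugation at all for interval components, which you do not treat. The normalization lives in $w_\gamma=R^{-|\gamma|}\widetilde w_\gamma$, which is invariant under homotopy rel boundary in $G(\UU)$ \cite[Lem.~2.8]{AMY}. This gives $w_{\lambda^0_i}=\widetilde w_{a_i}^{-1}w_{\lambda^1_i}\widetilde w_{a_i}$ exactly, for $a_i$ a path across the annulus between the two basepoints. For interval components, the boundary condition of Definition~\ref{def:kut} makes both meridians the same region of $\UU$, and makes both arcs homotopic through the sides, which cross no cut-arc.
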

Recall that the $q$-th nilpotent quotient of a link is the quotient of the fundamental group of the exterior by the $q$-th term of its lower central series ($q\ge 1$), and that the $q$-th \emph{nilpotent peripheral system} is the data of this quotient together with (the image of) so-called peripheral elements, which are represented by a meridian and preferred longitude for each component. Nilpotent peripheral systems extend naturally to all $1$--dimensional cut-diagrams. 

As a consequence of Theorem \ref{th:NilpotentConcordance}, all invariants derived from the peripheral system, such as Milnor invariants, remain unchanged under cut-concordance (Corollary \ref {cor:MilnorConcordance}). 
Another noteworthy consequence of our proof, given in Section \ref{sec:onsamusebiennon}, is a purely  combinatorial proof and generalization of a theorem of Stallings \cite[Thm.~5.2]{Stallings} in the case of a topological link concordance (Proposition \ref{rem:Stallings}). 
\medskip 

In addition, we investigate in Section \ref{sec:arrementnimp} the relationship between cut-concordance and other equivalence relations that appear in diagrammatic knot theory. In particular, we compare it with welded concordance \cite{BC} and with analogues of link-homotopy such as the $sv$-equivalence generated by self-virtualization moves \cite{AM,ABMW} and the reduced cut-concordance for cut-diagrams (Section \ref{sec:4.2.1}). 
The following commutative diagram summarizes how cut-concordance fits into this broader hierarchy of equivalence relations on classical, welded, and diagrammatic knotted objects, where we use the standard notations for surjections (double-headed arrow) and injections (hooked arrow tail): 
\[
  \xymatrix@R=3pc@C=4pc{
   \dfrac{\big\{\textrm{links}\big\}}{\textrm{concordance}}\ar[r] \ar@{->>}[d]_{\textrm{\cite{Goldsmith,Giffen}}}  & \dfrac{\big\{\textrm{welded links}\big\}}{\textrm{welded concordance}}\ar@{->>}[r]^{\textrm{Lem.~\ref{prop:wctocut}\quad\qquad}}\ar@{->>}[d]^{\textrm{Rem.~\ref{lem:niscatedebernoulli}}} & \dfrac{\big\{\textrm{$1$--dim. cut-diagrams}\big\}}{\textrm{cut-concordance}}\ar@{->>}[d]^{\textrm{Rem.~\ref{lem:niscatedebernoulli}}} \\
   \dfrac{\big\{\textrm{links}\big\}}{\textrm{link-homotopy}}\ar@{^{(}->}[r]^{\textrm{\cite{AM,AMY_w}}\quad} & \dfrac{\big\{\textrm{welded links}\big\}}{\textrm{sv-equivalence}}\ar@{^{(}->>}[r]^{\textrm{Lem.~\ref{lem:svtocut}\quad\qquad}}_{\textrm{Cor.~\ref{lem:cuttosv}\quad\qquad}} 
   & \dfrac{\big\{\textrm{$1$--dim. cut-diagrams}\big\}}{\textrm{reduced cut-concordance}}
  }
\]
\medskip

We stress that, although this introduction only deals with knots and links, \emph{all} results in this paper are actually given in the broader context of tangles. 
Recall that a \emph{tangle with skeleton $X$}, where $X$ is a compact, oriented and ordered $1$-manifold, is the image of a proper embedding of $X$ in the $3$-ball, such that the boundary points of $X$ are mapped to a fixed set of ordered marked points on the boundary $2$-sphere, up to ambient isotopy fixing the endpoints. Additional boundary conditions may be imposed: for example, a string link is a tangle with skeleton a union of intervals, such that the $i$-th interval runs, say, from the $(2i-1)$-th to the $2i$-th marked point.

\section{Cut-diagrams}  \label{sec:kut}

In this section we quickly review the theory of cut-diagrams developed in \cite{AMY,AMY_kirk}.

\subsection{$1$--dimensional cut-diagrams and welded tangles}\label{sec:1kut}

\subsubsection{$1$--dimensional cut-diagrams}
Let $X$ be a compact oriented $1$-manifold. 
\begin{defi}
A \emph{$1$--dimensional cut-diagram} over $X$ is a collection of pairwise distinct points on $X$, called \emph{cut-points}, endowed with a sign and labeled by some region, where the \emph{regions} are the connected component of $X$ with all cut-points removed. 
\end{defi}
\noindent We stress that in this definition there is no constraint on the labeling. 
\medskip 

Given a tangle diagram $D$, which is a (generic) proper immersion $X\looparrowright \mathbb{D}^2$ in the $2$-disk $\mathbb{D}^2$ with an over/under decoration at each double point, there is an associated cut-diagram $C_D$ over $X$ defined as follows. For each crossing, consider the preimage with lowest coordinate in the projection axis: this collection of points forms the cut-points of $C_D$. 
Each cut-point is naturally labeled by the sign of the
corresponding crossing, and by the region associated with the overpassing strand. 
See Figure \ref{fig:1DimCut} for examples. 
\begin{figure} 
  \[
    \dessin{2.3cm}{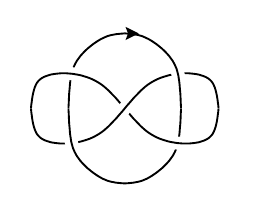} \ \leadsto \, \dessin{2.7cm}{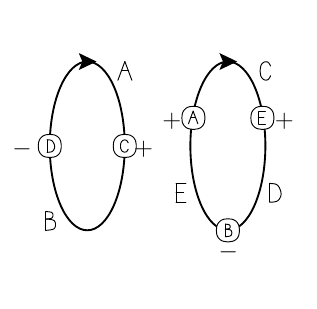}\quad ; \quad
\dessin{2.5cm}{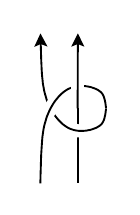} \ \leadsto\, \dessin{2.7cm}{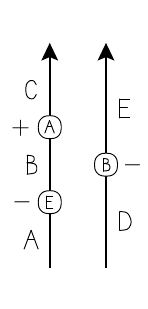}
  \]
  \caption{$1$--dimensional cut-diagrams arising from tangle diagrams.\\{\footnotesize Regions are
      named with capital letters, and labels on cut-points are given by circled nametags}}
  \label{fig:1DimCut}
\end{figure}

A $1$--dimensional cut-diagram arising in this way will be referred to as \emph{topological}. 

\begin{rem}\label{rem:kutGD}
Many $1$--dimensional cut-diagrams are not topological. 
This is reminiscent of Gauss diagrams \cite{GPV}, which often are not realizable by a link or tangle. As a matter of fact, these notions are closely related. 
Indeed, a cut-point of a $1$--dimensional cut-diagram can be seen an arrow head in a Gauss diagram, and the label represents the region where the arrow tail is attached. 
The only difference is that a cut-diagram does not specify the relative position of adjacent tails: cut-diagrams can thus be seen as Gauss diagrams up to the \emph{Tail-Commute move}, which is the local move swapping two adjacent arrow tails. 
\end{rem}

\subsubsection{Topological moves and welded tangles}

The three \emph{$1$--dimensional topological moves} given in Figure \ref{fig:IsotopyDim1} are the straightforward translations of each of the three Reidemeister moves of knot theory into the language of $1$--dimensional cut-diagrams. 
\begin{figure}
\[
\dessin{2cm}{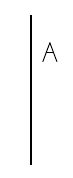}\ \stackrel{\textrm{R1}}{\longleftrightarrow}\ \dessin{2cm}{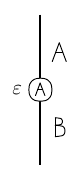}
\hspace{1.5cm}
\dessin{2cm}{Dim1Move1_1}\ \stackrel{\textrm{R2}}{\longleftrightarrow}\ \dessin{2cm}{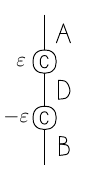}
\hspace{1.5cm}
\dessin{2cm}{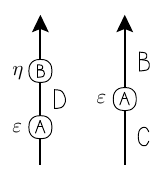}\ \stackrel{\textrm{R3}}{\longleftrightarrow}\ \dessin{2cm}{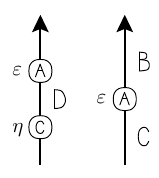}
\]
  \caption{Topological moves on $1$--dimensional cut-diagrams ($\e,\eta=\pm$):\\{\footnotesize
      in the middle and right moves, region $D$ must not occur as the
      label of any cut-point; in the left/middle move,
      when going from left to right,
      every $A$--label becomes either an $A$ or
      $B$--label, and when going from right to left, all the
      $A$ and $B$--labels become
      $A$--labels}}
\label{fig:IsotopyDim1}
\end{figure}
Two $1$--dimensional cut-diagrams are called \emph{equivalent} if they differ by a sequence of these moves. 
\medskip

Recall that a \emph{virtual diagram} with skeleton $X$ is a generic proper immersion of the $1$-manifold $X$ in the $2$-disk, whose transverse double points are decorated as either classical crossings (as in usual knot diagrams) or virtual crossings (depicted by a circled double point).  
A \emph{welded tangle} with skeleton $X$ is the equivalence class of a virtual diagram (with skeleton $X$) modulo \emph{welded Reidemeister moves}: this is the set of local moves given by the usual Reidemeister moves, the virtual Reidemeister moves (where all classical crossings in Reidemeister moves are replaced by virtual ones), 
together with the Mixed and the Over-Commute (OC) moves given in  Figure \ref{fig:wMoves}. 
In particular, \emph{welded links  (resp. string links)} are equivalence classes of virtual diagrams with skeleton a union of circles (resp. of intervals).  
\begin{figure}
\[
    \begin{array}{cc}
      \dessin{1.5cm}{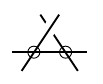}\stackrel{\textrm{Mixed}}{\longleftrightarrow} \dessin{1.5cm}{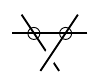} \, & \, 
      \dessin{1.5cm}{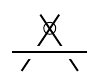}\ \stackrel{\textrm{OC}}{\longleftrightarrow}
      \dessin{1.5cm}{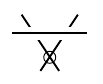}
    \end{array}
\]
  \caption{The mixed and OC moves}\label{fig:wMoves}
\end{figure}

Now, as noticed in Remark \ref{rem:kutGD}, $1$--dimensional cut-diagrams are in one-to-one correspondence with Gauss diagrams up to Tail-Commute moves. 
Since Gauss diagrams up to Reidemeister and Tail-Commute moves exactly
correspond to welded tangles (see eg. \cite[\S~4.5]{ABMW}),
$1$--dimensional cut-diagrams up to topological moves are in fact just
a reformulation of the welded theory. More precisely, we have the
following.
\begin{prop}
Equivalence classes of $1$--dimensional cut-diagrams over a $1$-manifold $X$ are in bijection with welded tangles with skeleton $X$.   
\end{prop}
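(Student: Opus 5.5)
The plan is to factor the bijection through Gauss diagrams, as suggested by Remark \ref{rem:kutGD}. Let $\mathcal{G}(X)$ be the set of Gauss diagrams on $X$: signed arrows whose heads and tails lie at pairwise distinct points of $X$. We will build a map $\Phi\colon \mathcal{G}(X)\to\{\textrm{cut-diagrams over }X\}$. The heads of $G$ become the cut-points, each keeping the sign of its arrow. Each cut-point is labeled by the region, meaning the component of $X$ minus all heads, that contains the tail of its arrow.

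First we check that $\Phi$ is surjective. Given a cut-diagram, we choose for each cut-point a tail position inside its labeling region, away from all other points. Next we check that two Gauss diagrams have the same image exactly when they differ by Tail-Commute moves and isotopy. Tails can be moved freely inside a region, only swapping with other tails, and never crossing a head. So $\Phi$ induces a bijection between Gauss diagrams modulo Tail-Commute and cut-diagrams.

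The second step compares the moves. Each Reidemeister move on Gauss diagrams, in its standard generating set, has an image under $\Phi$ that is an R1, R2 or R3 topological move of Figure \ref{fig:IsotopyDim1}. Conversely, we must lift each topological move to a Gauss diagram move, with Tail-Commute moves allowed. The region bookkeeping in the caption of Figure \ref{fig:IsotopyDim1} is exactly what the lift needs.
\begin{itemize}
\item Creating a cut-point splits a region $A$ into $A$ and $B$, and each old $A$-label becomes $A$ or $B$. This corresponds to choosing on which side of the new head each tail sits.
\item Those positions can be arranged by Tail-Commute moves before the Reidemeister move is applied.
\item The condition that region $D$ carries no label says that no tails lie in the small arc being deleted. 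This is what makes the Gauss-diagram R2 or R3 applicable.
\end{itemize}
This lifting, in particular the R3 case with its sign constraints, is the main obstacle. The sign and orientation conventions of the R3 pictures must match the standard Polyak-type generating set, possibly up to Tail-Commute, R2 and the forced orientation choices of the arrows.

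Finally we invoke the known fact, e.g.\ \cite[\S~4.5]{ABMW}, that Gauss diagrams on $X$ modulo Reidemeister and Tail-Commute moves are in bijection with welded tangles of skeleton $X$. Here virtual crossings are forgotten, and the OC move becomes Tail-Commute. Composing this with the bijection from the first two steps gives the proposition.
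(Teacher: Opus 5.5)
Your proposal follows essentially the same route as the paper. The paper justifies the statement in one line, by combining the correspondence of Remark \ref{rem:kutGD} (cut-diagrams are Gauss diagrams modulo Tail-Commute) with the fact from \cite[\S~4.5]{ABMW} that Gauss diagrams modulo Reidemeister and Tail-Commute moves correspond to welded tangles, taking R1--R3 as direct translations of the Reidemeister moves; your explicit construction of $\Phi$ and your lifting of the topological moves, including the R3 sign check you flag, spell out details the paper leaves implicit under the phrase \lq straightforward translations\rq.
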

Since two diagrams of a same tangle are connected by a finite sequence
of Reidemeister moves, and since $1$--dimensional topological moves are
the direct translations of these moves, we have a well-defined map
\[
  \Xi_1:\big\{\textrm{tangles}\big\}\to \big\{\textrm{equivalence classes of
    $1$--dimensional cut-diagrams}\big\}.
\]

\subsection{$2$--dimensional cut-diagrams and surface-links}\label{sec:kut2}

Let $\Sigma$ be a compact oriented surface, possibly with boundary.

\subsubsection{$2$--dimensional cut-diagrams}

A diagram on $\Sigma$ is a compact oriented (generically, but not necessarily properly)
immersed $1$-manifold $P$ in $\Sigma$, together with an over/under decoration at each crossing.  The diagram $P$  splits $\Sigma$ into connected components called \emph{regions}, 
  and the under crossings split $P$ into \emph{cut-arcs}. 

\begin{defi}\label{def:cut}
  A \emph{$2$--dimensional cut-diagram} over $\Sigma$ is a diagram on $\Sigma$, endowed with a labeling of each cut-arc by a region, satisfying the following \emph{labeling rules}: 
 \begin{itemize}
  \item[1)] for each crossing involving labels $A,B,C$ as shown on the left-hand side of Figure \ref{fig:cond}, the regions $A$ and $B$ are adjacent along a $C$-labeled cut-arc as illustrated in the figure; 
  \item[2)] a cut-arc containing a univalent vertex in region $A$, is labeled by $A$ (see the right-hand side of Figure \ref{fig:cond}). 
  \end{itemize}
 \end{defi}
\begin{figure}
  \[ \begin{array}{ccc}
   \dessin{2cm}{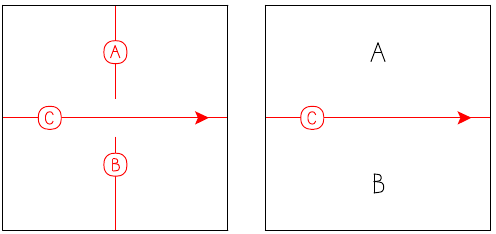} & & \quad\dessin{2cm}{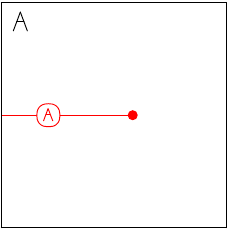} \\[0.5cm]
   \textrm{First rule} & & \quad\textrm{Second rule} 
   \end{array}\]
  \caption{Labeling rules for $2$--dimensional cut-diagrams} 
  \label{fig:cond}
\end{figure}
In figures, we shall represent cut-arcs in red to avoid any confusion with $\partial \Sigma$.
\medskip 

Any surface-link in $4$-space yields a ($2$--dimensional) cut-diagram. 
This arises from the notion of \emph{broken surface diagrams},\footnote{Here we shall simply called these \lq surface diagrams\rq.} (recalled below), which are natural analogues of knot diagrams for surface-links, see \cite{CS}. 
Recall that a \emph{surface-link} is a proper smooth embedding of a compact oriented surface in the $4$--dimensional ball; two surface-links are \emph{equivalent} if their images are ambient isotopic relative to boundary.

Consider a surface-link $S$, given by a smooth embedding of the surface $\Sigma$ into $\mathbb{R}^4$. 
A  surface diagram of $S$ is given by a generic  projection from $\mathbb{R}^4$ to $\mathbb{R}^3$. 
The resulting immersed surface contains lines of transverse double points, which may meet at triple points and/or end at branch points. 
Double points are endowed with an over/under information using the projection axis, which is encoded by cutting off a neighborhood of the lowest preimage. 
The local models for triple points and branch points are given on the left-hand side of Figure \ref{fig:local}. 
Each line of double points also has a natural orientation, as follows: the local frame given by a positive normal vector to the \lq over region\rq, a positive normal vector to the \lq under region\rq, and a positive tangent vector to the double point line, must agree with the ambient orientation of $\R^3$. 
\begin{figure}
   \[\begin{array}{ccc}
  \dessin{2.7cm}{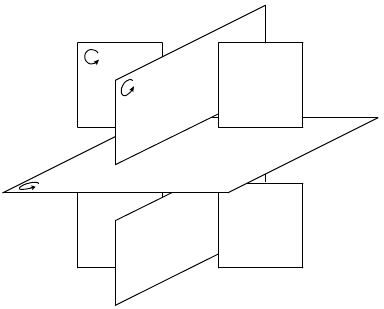} & \leadsto & \dessin{2.7cm}{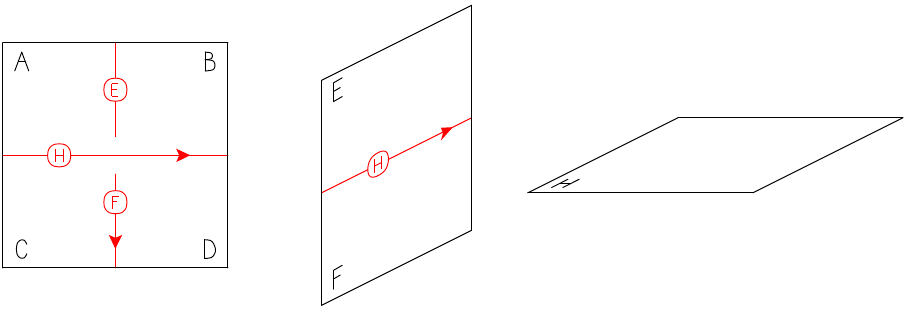}    \end{array}\]
   \[
  \begin{array}{ccc}
  \dessin{2cm}{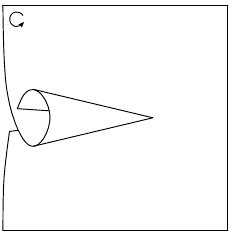} & \leadsto & \dessin{2cm}{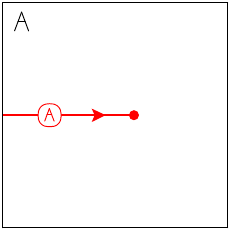}
   \end{array}\]
\caption{Local models for triple points and branch points in a  surface diagram (left), and the associated local cut-diagrams (right)}
  \label{fig:local}
\end{figure}

A $2$--dimensional cut-diagram $D_S$ on $\Sigma$ is naturally associated with a 
surface diagram, as follows. 
Consider on the surface $\Sigma$ the preimages of all double points:  the \emph{lower point set} is a union $P$ of oriented immersed circles and/or intervals, 
and each triple point of the surface diagram provides an over/under information at the corresponding crossing of $P$,  while each branch point produces an internal point of $P$, see Figure \ref{fig:local}.
We label each arc of the resulting diagram by the region containing the preimage with highest coordinate at the corresponding line of double points. As Figure \ref{fig:local} illustrates, this labeling automatically satisfies the labeling rules of Definition \ref{def:cut}. 

A $2$--dimensional cut-diagram arising in this way, from a surface-link in $4$--space, is called \emph{topological}.

\subsubsection{Topological moves for $2$--dimensional cut-diagrams}\label{sec:kut2.2}

The above procedure associates a $2$--dimensional cut-diagram to a fixed surface diagram of a surface-link. 
But there is a Reidemeister-type theorem for surface diagrams, due to D.~Roseman: 
two surface diagrams represent equivalent surface-links if and only if they differ by a sequence of the seven \emph{Roseman moves} given in \cite{Roseman3}. These Roseman moves can be reformulated into the language of cut-diagrams, producing a set of \emph{topological moves} for $2$--dimensional cut-diagrams. Such a set of moves $T_1$, ... , $T_7$ is given in \cite[Fig.~3.4]{AMY_kirk} (an equivalent set of moves, called cut-moves, can  be found in \cite[Fig.~6]{AMY}). 

Two $2$--dimensional cut-diagrams are \emph{equivalent} if they differ by a sequence of topological moves. 
 By Roseman's theorem \cite[Thm.~1]{Roseman3}, two topological
 cut-diagrams of equivalent surface-links are hence equivalent; 
 see \cite{AMY,AMY_kirk}. We thus have a well-defined map
\[
  \Xi_2:\big\{\textrm{surface-links}\big\}\to \big\{\textrm{equivalence classes of
    $2$--dimensional cut-diagrams}\big\}.
\]

\medskip 

In this paper, we shall only need the three topological moves T1, T3 and T5 shown in Figure \ref{fig:topomoves}.
\begin{figure} 
  \[
  \hspace{-.75cm}  \begin{array}{ccccc} 
     \dessin{1.7cm}{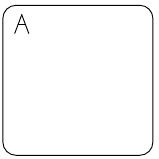} \stackrel{\textrm{T1}}{\longleftrightarrow} 
     \dessin{1.7cm}{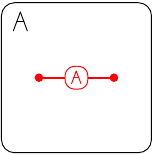} & & 
     \dessin{1.7cm}{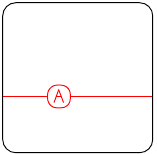} \stackrel{\textrm{T3}}{\longleftrightarrow} 
     \dessin{1.7cm}{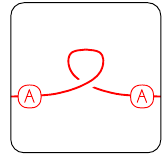}  & & 
     \dessin{1.7cm}{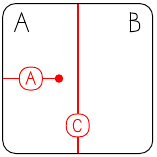} \stackrel{\textrm{T5}}{\longleftrightarrow} 
     \dessin{1.7cm}{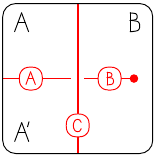}  
  \end{array}
  \]
  \caption{Three topological moves for $2$--dimensional cut-diagrams:\\ 
  \footnotesize{different letters for regions or arc labels may refer to
      the same region. 
  }}
\label{fig:topomoves}
\end{figure}
We stress that move T5 may split a region $A$ into two regions $A$ and $A'$, or conversely merge two existing regions. In the former case, all $A$-labeled cut-arcs are relabeled by either $A$ or $A'$, in such a way that the labeling rules of Definition \ref{def:cut} are fulfilled; in the latter case, all cut-arcs labeled by $A$ or $A'$ become $A$-labeled. 

\section{Group and nilpotent peripheral systems of cut-diagrams}

In this section, let $\UU$ be a cut-diagram over $M$, where $M=M_1\cup \cdots \cup M_n$ is a compact oriented manifold of dimension either $1$ or $2$. 

\subsection{The group of a cut-diagram}

The \emph{group} $G(\mathcal{C})$ of $\mathcal{C}$ is the group 
generated by its regions, with relations as follows:  
\begin{itemize}
\item if dim$M=1$, a relation $B^{-1}C^{-1}AC$ for each pair of regions $(A,B)$ that are adjacent at a $C$-labeled cut-point as in one of the two configurations shown on the left-hand side of Figure \ref{fig:adj};
\item if dim$M=2$, a relation $B^{-1}C^{-1}AC$ for each pair of regions $(A,B)$ that are adjacent along a $C$-labeled cut-arc as on the right-hand side of Figure \ref{fig:adj}.
\end{itemize}
\begin{figure}
\[\dessin{2.5cm}{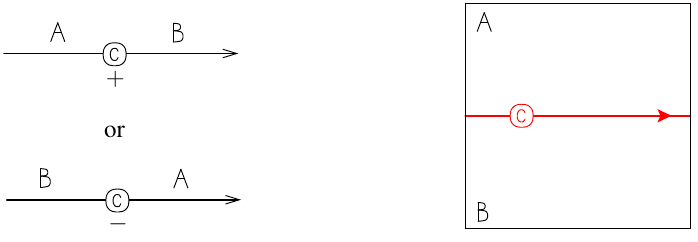}\]
 \caption{Configurations giving the relation $B^{-1}C^{-1}AC$ in the group of a $1$--dimensional (left) or $2$--dimensional (right) cut-diagram}
\label{fig:adj}
\end{figure}
An \emph{$i$-th meridian} is a region of $M_i$ ($i=1,\ldots,n$) when regarded as a generator of  $G(\mathcal{C})$. 
Note that any two $i$-th meridians are always conjugate in $G(\CC)$. 

\begin{prop}\label{prop:iniscent}
Let $\UU'$ be a cut-diagram obtained from $\UU$ by some topological move. 
There is a canonical isomorphism from $G(\UU)$ to $G(\UU')$, which induces the identity on the region labeling of any cut-point/cut-arc that is not involved in the move. 
\end{prop}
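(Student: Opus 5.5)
The plan is a case-by-case check over the list of topological moves. In each case we use Tietze transformations on the presentation of $G(\UU)$ given by regions and adjacency relations. Since a move is local, most generators and relations are shared by $\UU$ and $\UU'$. We define the isomorphism on generators by sending each region not touched by the move to the region of $\UU'$ with the same name. A region that is split by the move is sent to one chosen piece; the other piece is expressed as a conjugate by the relation created by the move. Merged regions are handled dually. We then check that the relations correspond. Because the map is the identity on untouched regions, the labels of cut-points or cut-arcs outside the move are preserved. This is the second part of the statement.

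For $\dim M=1$ we treat R1, R2 and R3 from Figure \ref{fig:IsotopyDim1}.

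R1 inserts a cut-point labeled by the adjacent region $A$ itself, splitting $A$ into $A$ and $B$. The new relation $B^{-1}A^{-1}AA$, i.e.\ $B=A$, lets us eliminate $B$. Relations in which $B$ appeared as a label become the old relations with label $A$, exactly as prescribed by the relabeling rule.

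R2 inserts two cut-points of opposite signs with the same label $C$, creating regions $B$ and $D$. The relations read $B=C^{\mp1}AC^{\pm1}$ and $D=C^{\pm1}BC^{\mp1}=A$. Since $D$ is not used as a label, we can eliminate $B$ and $D$ with no other relation affected. The only subtlety is that $A$-labels are redistributed between $A$ and $D$, which is harmless because $D=A$ in the group.

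R3 changes the label of one cut-point between two regions, say from $B$ to a conjugate of it by $C$. The relations before and after differ by the relation expressing the middle region. A direct computation, of the form $(ACA^{-1})$-conjugation versus successive conjugation, shows that each set of relations implies the other, with $D$ eliminable as before.

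For $\dim M=2$ we do the same for the moves $T_1,\dots,T_7$ of \cite[Fig.~3.4]{AMY_kirk}. These are the cut-diagram shadows of the Roseman moves; it suffices to treat the generating set listed there.

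Moves that create or delete a small disk or a branch-point pair, like T1 and T5, create a new region (or split one). The labeling rules of Definition \ref{def:cut} force an adjacency relation that identifies the new region with a conjugate of an old one. For instance, the second rule forces the label at a univalent vertex to be the ambient region, which makes the relation trivial and identifies the two sides. This allows a Tietze elimination.

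Moves involving crossings of cut-arcs (triple-point moves) preserve the set of regions. The new adjacency relations follow from the old ones together with the crossing relation. The first labeling rule guarantees that the regions $A$ and $B$ at a crossing satisfy $B=C^{-1}AC$, and this is exactly what is needed for consistency.

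The main obstacle is the $2$-dimensional case. There are seven moves with several sign and orientation variants, and the triple-point move (the analogue of the tetrahedral move) requires checking a conjugation identity among several labels. I would handle it by writing each side's relations as "the region across a $C$-arc equals the $C$-conjugate", then verifying both sides yield the same expressions for all regions in terms of one base region. Alternatively, for topological diagrams one can note that $G$ is the Wirtinger-type presentation of $\pi_1$ of the complement. The moves correspond to isotopies, which gives the intuition; however, the combinatorial verification is still needed for non-topological diagrams, since the labeling is arbitrary there.
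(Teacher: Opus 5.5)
Your move-by-move Tietze verification is essentially what the paper does: it defers to \cite[Lem.~3.11]{AMY_kirk} for the $2$--dimensional moves and notes that the argument adapts to the $1$--dimensional moves R1--R3, which your computations cover. One slip to fix in R2: the region that must not occur as a label is the local middle region you eliminate (your $B$), not the outer region equal to $A$ (your $D$), which does inherit $A$-labels; more generally, eliminating a region $X=w$ only reproduces the other presentation when $X$ is not a label elsewhere or $w$ is itself a single region, and the side conditions on the moves (in both dimensions) are what guarantee this.
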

\begin{proof}
A proof for the case of the
  dimension $2$ is given in  \cite[Lem.~3.11]{AMY_kirk}, which
  adapts easily to the $1$--dimensional case.
\end{proof}

We note the following immediate consequence of the Wirtinger algorithm (see \cite[Prop.~2.2]{AMY} for the $2$--dimensional case). 
\begin{prop}\label{prop:ortion}
If $T$ is a tangle, the fundamental group $\pi_1(B^3\setminus T)$ of its complement is isomorphic to the group $G\big(\Xi_1(T)\big)$ of any associated $1$--dimensional cut-diagram; likewise for a surface-link $S$ we have $\pi_1(B^4\setminus S)\cong G\big(\Xi_2(S)\big)$. In both case, topological meridians are sent to cut-diagram meridians by this isomorphism.
\end{prop}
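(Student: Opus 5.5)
The plan is to compare the Wirtinger presentation with the presentation of $G(\C)$ directly, treating the $1$--dimensional and $2$--dimensional cases in parallel. I will fix a diagram in each case: for a tangle $T$, a tangle diagram $D$; for a surface-link $S$, a broken surface diagram obtained by a generic projection. Proposition \ref{prop:iniscent} says that any two cut-diagrams in the same equivalence class have canonically isomorphic groups. Hence it is enough to prove the statement for the cut-diagram $C_D$, respectively $D_S$, coming from that fixed diagram.

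\emph{Dimension $1$.} The Wirtinger presentation of $\pi_1(B^3\setminus T)$ has one generator for each overarc of $D$. An overarc is exactly a portion of $X$ running between two consecutive lowest preimages of crossings, that is, a region of $C_D$. The generator is the loop that goes down from a basepoint above the projection plane, once around the arc with linking number $+1$, and back up; this is a topological meridian.

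At each crossing the Wirtinger relation has the form $b=c^{-\epsilon}ac^{\epsilon}$. Here $a,b$ are the two underarcs meeting at the crossing, $c$ is the overarc, and $\epsilon$ is the sign. By construction $c$ is the region labelling the corresponding cut-point. After normalising by orientations, this is the relation $B^{-1}C^{-1}AC$ read off from the two configurations of Figure \ref{fig:adj}, where the order of $A$ and $B$ along $X$ is fixed by the sign. So the two presentations coincide term by term, the isomorphism sends the Wirtinger meridians to the regions, and the claim about meridians follows.

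\emph{Dimension $2$.} The same argument uses the Wirtinger-type presentation for broken surface diagrams, as in \cite[Prop.~2.2]{AMY}. The generators are the sheets of the broken surface diagram, which are precisely the regions of $\Sigma\setminus P$, and each sheet contributes the meridian loop around it. There is one relation for each arc of double points, and it conjugates one lower sheet to the other by the upper sheet. On $\Sigma$ this is the pair of regions $A,B$ adjacent along a cut-arc whose label $C$ is the upper sheet, so the relation is again $B^{-1}C^{-1}AC$.

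Two points need checking in the surface case:
\begin{itemize}
\item Double-point arcs are broken at triple points into several arcs. On $\Sigma$ these are exactly the cut-arcs, cut at the under crossings of $P$, so the relations correspond one-to-one.
\item Branch points give no extra relation. This is consistent with the second labelling rule, since near a branch point $A=B=C$.
\end{itemize}

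The main obstacle is the sign and orientation bookkeeping: we must check that the conventions of Figure \ref{fig:adj} (which of $A,B$ gets conjugated and by $C$ or $C^{-1}$) agree with the crossing sign in dimension $1$ and with the orientation of double point lines in dimension $2$. I would do this once on local pictures, using the frame convention stated above. Any mismatch would only replace $C$ by $C^{-1}$, which the choice of meridian orientation absorbs. Everything else is the standard Wirtinger/van Kampen computation, which I take as known.
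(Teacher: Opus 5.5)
Your proposal is correct and is essentially the paper's own argument: the paper records this statement as an immediate consequence of the Wirtinger algorithm (citing \cite[Prop.~2.2]{AMY} for surface-links), and your term-by-term comparison of presentations is exactly that. One small bookkeeping slip: double-point edges correspond to the pieces of $P$ cut at \emph{all} preimages of triple points, not to cut-arcs (the over-strand at a crossing of $P$ carries two edges that in general border different pairs of regions), but the two sets of relations still coincide, since $G(\C)$ has one relation per pair of regions adjacent along a cut-arc.
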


Now, let $\gamma$ be an oriented path on $M$;  
in the $1$--dimensional case, $\gamma$ is given the orientation induced by $M$. 
We may freely assume that $\gamma$ intersects $\mathcal{C}$ transversally at a finite number of regular points.  We associate elements $\widetilde{w}_\gamma$ and $w_\gamma$ in $G(\mathcal{C})$ as follows. 
For the $i$-th intersection point between $\gamma$ and $\mathcal{C}$ met when running along $\gamma$ according to its orientation,  
denote by $R_i$ the label of the cut-point/cut-arc met at this point, and by $\varepsilon_i$ the 
sign of the cut-point/intersection point.
Then 
\[
\widetilde{w}_\gamma:=R_1^{\varepsilon_1}\cdots
R_{k}^{\varepsilon_{k}} \quad \textrm{ and } \quad 
 w_\gamma:=R^{-\vert \gamma \vert}\widetilde{w}_\gamma, \]
\noindent where $R$ is the starting region of $\gamma$, $k=|\gamma\cap\mathcal{C}|$ is the number of intersection points between $\gamma$ and $\mathcal{C}$, and $\vert \gamma\vert$  is the sum of the exponents $\varepsilon_i$ in $\widetilde{w}_{\gamma}$ such that $R_i$ is in the same connected component as $R$. 
\\
Note that if $\gamma$ and $\gamma'$ are two homotopic paths, relative to boundary, on $M$ 
then $w_\gamma=w_{\gamma'}$ in $G(\mathcal{C})$; see \cite[Lem.~2.8]{AMY}.  

Now, for each $i$, pick a basepoint $p_i$ in the interior of some region of $M_i$; 
this specifies a \emph{preferred meridian} on each component of $M$. 
\begin{defi}\label{def:longitudes}
A \emph{system of loop-longitudes} for $M_i$ is a  
collection of words $w_{ij}:= w_{\gamma_{ij}}$ in $G(\UU)$, where 
$\{\gamma_{ij}\}_j$ is a collection of loops based at $p_{i}$ representing a generating set for $\pi_1(M_i;p_i)$. 
\end{defi}

\begin{rem}\label{rem:long1}
In the $1$--dimensional case, several choices are canonical. 
On the one hand, for each $i$ such that $M_i\cong [0,1]$, there is a canonical basepoint, in the (interior of the) first region met following the orientation. 
On the other hand, given a choice of basepoint for each $S^1$-component of $M$, there is a canonical system of loop-longitudes given, for each $i$ such that $M_i\cong S^1$, by the based loop running along $M_{i}$ following the orientation.
\end{rem}

\subsection{Nilpotent quotients and the Chen maps}
A convenient feature of the group of a cut-diagram $\C$, is that its
\lq nilpotent quotients\rq, defined below, have a presentation with only one
generator per component of $M$, and relations being either iterated commutators or meridian/longitude commutations; moreover, these longitudes are algorithmically computed using
the so-called \lq Chen maps\rq. We review now all these notions.

Here and throughout the rest of this paper, the commutator of two elements $a,b$ of some group is defined as $[a,b]:=a^{-1}b^{-1}ab$.  
\begin{defi}
The \emph{lower central series} $\left( G_q\right)_{q\ge 1}$ of a group $G$, 
is the descending series of subgroups inductively defined by $G_1=G$ and $G_{q+1}=[G,G_q]$. For $q\ge 1$, the \emph{$q$-th nilpotent quotient} of $G$ is the quotient $N_qG:=\fract{G}{G_q}$. 
\end{defi}

Returning to the cut-diagram $\UU$ over $M$, we can thus define for $q\ge 1$ the \emph{$q$-th  nilpotent quotient} $N_qG(\UU)$ of $G(\UU)$.
The following is proved in \cite[Thm.~2.23]{AMY} for $2$--dimensional cut-diagrams, and is also known for the $1$--dimensional case (see Remark \ref{rem:dim1} below). 
\begin{theo}\label{thm:alaChen} 
Let $\UU$ be a cut-diagram over $M=M_1\cup \cdots \cup M_n$, and let $R_i$ and $\{w_{ij}\}$ be a choice of meridian and system of loop-longitudes for each $i$.
We have the following presentation for $N_qG(\UU)$ for each $q\ge 1$:\\[-0.35cm]
\[ N_qG(\UU) = 
\Big\langle R_1,\ldots,R_n\ \ \Bigg|
  \begin{array}{l}
    F_q\, ; \, \big[R_i,\eta_q(w_{ij})\big]\textrm{ for all $i,j$.}
  \end{array}
\Big\rangle.
\]
\end{theo}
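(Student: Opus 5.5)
We follow Chen's and Milnor's classical strategy, in the form used in \cite[Thm.~2.23]{AMY}. Here $F$ denotes the free group on $R_1,\ldots,R_n$, $F_q$ its $q$-th lower central subgroup, and $\eta_q$ the Chen map. The Chen map is defined inductively on regions: $\eta_1(R)=R_i$ for $R$ a region of $M_i$, and $\eta_{q+1}(R)=\eta_q(w_\gamma)^{-1}R_i\,\eta_q(w_\gamma)$, where $\gamma$ is a path in $M_i$ from the basepoint $p_i$ to $R$. It extends to words in regions, and so to words such as $w_{ij}$.

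\textbf{Step 1: well-definedness modulo $F_q$.} We show that $\eta_q(R)$ is well defined modulo $F_q$ together with the relations $[R_i,\eta_{q-1}(w_{ij})]$, by induction on $q$. Two paths from $p_i$ to $R$ differ by a loop based at $p_i$. Up to homotopy rel boundary, such a loop is a product of the $\gamma_{ij}$, and $w_\gamma$ is invariant under this homotopy by \cite[Lem.~2.8]{AMY}. Hence two choices of $\eta_{q+1}(R)$ differ by conjugating $R_i$ by a product of the $\eta_q(w_{ij})$, which commute with $R_i$ in the proposed quotient. The only delicate point is that $\eta_q(w_\gamma)$ is defined only modulo $F_q$. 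This is harmless: conjugating $R_i$ by an element of $F_q$ changes it only by an element of $F_{q+1}$, since $[R_i,F_q]\subset F_{q+1}$.

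\textbf{Step 2: the two homomorphisms.} Let $N$ denote the group presented in the statement. We define $\phi\colon N\to N_qG(\UU)$ by sending $R_i$ to the chosen meridian, and $\psi\colon N_qG(\UU)\to N$ by sending each region $R$ to $\eta_q(R)$.

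For $\phi$, note that $F_q$ maps into $G(\UU)_q$. Moreover $w_{ij}$, read as a path word, commutes with $R_i$ in $G(\UU)$: running along a loop and using the Wirtinger-type relations, one returns to the same region, so $R_i^{w}=R_i$. Combining this with the identity $\eta_q(w)\equiv w \bmod G(\UU)_q$, which comes from the same induction, shows that $\phi$ is well defined.

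For $\psi$, we must check each defining relation $B=C^{-1}AC$, where $A,B$ are adjacent along a $C$-labelled cut. Take the path to $B$ to be the path to $A$ extended across the cut. Then $w_{\gamma_B}$ equals $w_{\gamma_A}$ multiplied by $C^{\pm1}$, up to the normalizing power of $R$, which commutes with $R_i$. This gives $\eta_q(B)=\eta_{q-1}(C)^{-1}\eta_q(A)\eta_{q-1}(C)$. Replacing $\eta_{q-1}(C)$ by $\eta_q(C)$ costs an element of $F_q$ inside a conjugation of an element of $F_1$, which is trivial modulo $F_{q+1}$ and hence modulo $F_q$. In dimension~2, where a region is an open set, we additionally use the first labeling rule, so that crossing cut-arcs at a crossing is consistent.

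\textbf{Step 3: the composites are the identity.} The composite $\psi\circ\phi$ is the identity, since $\eta_q(R_i)=R_i$ for the basepoint region. For $\phi\circ\psi$, we show by induction on $q$ that every region $R$ is congruent to $\eta_q(R)$ modulo $G(\UU)_q$. This holds because $R=w_\gamma^{-1}R_iw_\gamma$ in $G(\UU)$, by iterating the adjacency relations along $\gamma$, and the conjugating word needs to be known only modulo $G(\UU)_{q-1}$.

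\textbf{Main obstacle.} The bookkeeping in Step~1 and in the relation check of Step~2 is where I expect the difficulty to lie. One must track which terms of the lower central series the errors fall into. One must also handle the normalization factor $R^{-|\gamma|}$, which is needed so that loop-longitudes commute with the meridian even when self-labels appear.
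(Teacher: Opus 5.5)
Your route is the Chen--Milnor argument by which \cite[Thm.~2.23]{AMY} is proved. The paper itself only cites that result, noting in Remark~\ref{rem:dim1} that dimension~$1$ is easier thanks to the canonical road network. Your maps $\phi$, $\psi$ and your Step~3 are fine. The problem is the ``Hence'' in Step~1.

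\cite[Lem.~2.8]{AMY} says that $w_\gamma=w_{\gamma'}$ in $G(\mathcal{C})$, i.e.\ that $w_\gamma w_{\gamma'}^{-1}$ lies in the normal closure $W\subset\overline F$ of the relators $B^{-1}C^{-1}AC$. It says nothing about the words themselves in $\overline F$. To deduce $\eta_q(w_\gamma)\equiv\eta_q(w_{\gamma'})$ modulo $F_q\cdot V_{(q)}$ (this is exactly Lemma~\ref{eq:alaChen}), you need $\eta_q(W)\subset F_q\cdot V_{(q)}$. That is precisely what your Step~2 proves, and Step~2 itself uses Step~1. So the induction hypothesis you state for Step~1 (each $\eta_q(R)$ is well defined) does not support the inductive step, and as written the argument is circular.

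In dimension~$2$ this is not a formality. The roads to two adjacent regions are in general unrelated, so replacing the road to $B$ by the road to $A$ followed by the crossing is the whole content of the relation check. Moreover, homotopies of paths must be pushed across crossings and univalent vertices of the diagram. That is where the two labeling rules (and the normalization $R^{-|\gamma|}$) enter, through Lem.~2.8. They do not enter in the relation check of Step~2, where you placed the first labeling rule.

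The fix is to merge Steps~1 and~2 into a single induction, using only ingredients you already have. Work with a fixed road network $\alpha$ as in Definition~\ref{def:alaChen}, so that $\eta_q=\eta_q^\alpha$ is an honest homomorphism $\overline F\to F$ and there is nothing to make well defined. Induct on the statement $P_q$: $\eta_q(W)\subset F_q\cdot V_{(q)}$; $P_1$ is trivial since $F_1=F$.

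Given $P_q$, Lem.~2.8 yields homotopy invariance of $\eta_q(w_\gamma)$ modulo $F_q\cdot V_{(q)}$. Now let $A,B$ be adjacent along a $C$-labelled cut, and let $\delta$ cross it from $A$ to $B$. Then $\alpha_B\simeq\ell\cdot\alpha_A\delta$, with $\ell$ a product of the loops $\gamma_{ij}^{\pm1}$, and $\eta_q(\widetilde w_\ell)$ commutes with $R_i$ modulo $V_{(q)}$. Since $[F,F_q\cdot V_{(q)}]\subset F_{q+1}\cdot V_{(q)}$, this gives $\eta_{q+1}(B)\equiv\eta_q(C)^{-1}\eta_{q+1}(A)\eta_q(C)$ modulo $F_{q+1}\cdot V_{(q)}$. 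Your Step~2 computation, together with $\eta_{q+1}\equiv\eta_q$ modulo $F_{q+1}$ and $F_{q+1}\cdot V_{(q)}=F_{q+1}\cdot V_{(q+1)}$, then gives $P_{q+1}$.

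Two side remarks. In dimension~$1$ with the canonical roads none of this is needed: the road to $B$ is literally the road to $A$ followed by the crossing, except at the last cut-point of a circle component, where the relator becomes exactly $[R_i,\eta_q(\widetilde w_{\lambda_i})]$. Also, the normalization plays no role in the commutation relators themselves, since powers of $R_i$ commute with $R_i$; it matters only through the homotopy invariance of Lem.~2.8.
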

\noindent 
Here $F$ is the free group generated by the meridians $R_i$, and $\eta_q$ is the $q$-th \emph{Chen map}, which we next define. 
These maps are homomorphisms from the free group $\overline{F}$, generated by \emph{all} regions $R_{ij}$ of $\UU$ (including the distinguished regions $R_i$), to the free group $F$; they provide a \lq rewriting algorithm\rq\, into a word in the chosen meridians. 

Defining these Chen maps requires the choice of an auxiliary combinatorial data:
\begin{defi}\label{def:ontheroadagain}
A \emph{road network} $\alpha$  for $\UU$ is the choice, on each component $M_i$ of
$M$, of a collection of generic oriented paths $\alpha_{ij}$, called \emph{roads}, running from the basepoint $p_i$ to a point in the interior of each region $R_{ij}$ of $M_i$. 
\end{defi}
\begin{rem}\label{rem:ontheroadagain}
In the case of a $1$--dimensional cut-diagram, there is a canonical road network, as follows.
For each component $M_i$ of $M$, each arc $\alpha_{ij}$ runs from the basepoint $p_i$ to the region $R_{ij}$ following the orientation. Note in particular that when doing so, the road to some region $R$ contains all roads associated with the regions previously met when traveling from the basepoint to $R$. 
The $2$--dimensional case is very different from this point of view, and there is, in general, no canonical choice of road network. 
\end{rem}

\begin{defi}\label{def:alaChen} 
Let $\alpha$ be a road network for $\UU$. 
The \emph{Chen maps} are a family of homomorphisms  
 $\eta_q^\alpha:\bar{F}\rightarrow F$  defined inductively by setting, for every $i,j$ and $q\ge1$:
 \begin{gather*}
   \eta_1^\alpha(R_{ij}):=R_i,\\
\eta_{q+1}^\alpha(R_i):=R_i\ \textrm{ and } \
\eta_{q+1}^\alpha(R_{ij}):=\eta_q^\alpha(v_{ij})^{-1} R_i \eta_q^\alpha(v_{ij}), 
\end{gather*}
\noindent where $v_{ij}=\widetilde{w}_{\alpha_{ij}}\in \overline{F}$ is the word representing the road $\alpha_{ij}$ of $\alpha$ running to region $R_{ij}$.
\end{defi}

\begin{rem}\label{rem:oulade}
 In the statement of Theorem \ref{thm:alaChen} we omitted the road network in the notation for the Chen map, because the result holds for \emph{any} choice of road network.
\end{rem}

\begin{rem}\label{rem:dim1}
The proof of Theorem \ref{thm:alaChen} is given in the $2$--dimensional case in \cite{AMY}, but the arguments also apply to $1$--dimensional cut-diagrams, where the proof is in fact much simpler, see Remark \ref{rem:ontheroadagain}.
This type of result was first given for links (i.e. topological cut-diagrams on circles) by Milnor \cite{Milnor2}, building on the work of Chen \cite{Chen} where these maps $\eta_q$ first implicitly appear. The string link case (topological cut-diagrams on intervals) is due to Habegger and Lin \cite{HL}: note that in this case no commutation relations are involved in the presentation, since there are no loop-longitude. The case of welded links and string links 
(general cut-diagrams on circles and intervals) was first obtained in \cite{Chrisman} and \cite{ABMW}, respectively. 
\end{rem}

A number of properties of these Chen maps are given in \cite[Section~6]{AMY}. 
We only recall here two statements that will be used in this paper. 
The first one is a homotopy invariance result, and the second one provides an explicit way to relate two presentations of $N_q G(\UU)$ by using Theorem \ref{thm:alaChen} with two different road networks. 
\begin{lem}\label{eq:alaChen}\cite[Prop.~6.8]{AMY}\,
Let $\alpha$ be  any choice of road network for $\UU$. 
If $\gamma$ and $\gamma'$ are two homotopic paths on $M$ relative to boundary, then $\eta_q^\alpha(w_\gamma) \equiv \eta_q^{\alpha}(w_{\gamma'})$ mod $F_q\cdot V_{(q)}$, 
where $V_{(q)}$ is the normal closure of all relations $[R_i, \eta_q^\alpha(w_{ij})]$ in $F$.
\end{lem}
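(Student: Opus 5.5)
The plan is to reduce the statement to a single local compatibility, namely that $\eta_q^\alpha$ respects the defining relations of $G(\UU)$ up to $F_q\cdot V_{(q)}$, and then to follow a generic homotopy from $\gamma$ to $\gamma'$. Since $\gamma$ and $\gamma'$ have the same starting region $R$, the prefactors $R^{-|\gamma|}$ and $R^{-|\gamma'|}$ only differ through the exponent sums. So it suffices to control how $\widetilde w_\gamma$ and the exponent $|\gamma|$ change along the homotopy.

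\emph{Step 1: reduction to elementary moves.} Put the homotopy rel boundary in general position with respect to $\UU$. Then $\gamma$ changes, up to isotopy, only through finitely many elementary events:
\begin{itemize}
\item[(a)] a tangency with a cut-point or cut-arc, which creates or cancels two adjacent intersections with the same label $C$ and opposite signs, so that $C^{\e}C^{-\e}$ is inserted in $\widetilde w_\gamma$;
\item[(b)] in dimension $2$, passing across a crossing of the diagram;
\item[(c)] in dimension $2$, passing across a univalent vertex.
\end{itemize}
In case (a) the change is trivial already in $\overline F$, and $|\gamma|$ is unchanged. In case (c), labeling rule 2) says the arc is labeled by the region $A$ containing the vertex. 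The word changes by an insertion of $A^{\pm1}$, which is compensated by the change of the region being traversed together with the correction of $|\gamma|$. In case (b), labeling rule 1) shows that the two words differ by a relator of the form $B^{-1}C^{-1}AC$, possibly conjugated. Hence everything reduces to the following claim.

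\emph{Step 2: the key claim.} For regions $A,B$ of $M_i$ adjacent across a $C$-labeled cut-point or cut-arc, we want
\[
\eta_q^\alpha(B)\equiv\eta_q^\alpha(C)^{-1}\eta_q^\alpha(A)\eta_q^\alpha(C) \mod F_q\cdot V_{(q)}.
\]
I would prove this by induction on $q$, together with the statement of the lemma itself at level $q$. The case $q=1$ is trivial, since both sides map to $R_i$, and $C$ maps to a meridian whose conjugation is trivial modulo $F_1=F$ anyway.

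For the inductive step, write $\eta_{q+1}^\alpha(A)=\eta_q^\alpha(v_A)^{-1}R_i\,\eta_q^\alpha(v_A)$, and similarly for $B$. The loop $\ell=\alpha_A\cdot\delta\cdot\alpha_B^{-1}$, where $\delta$ is a short path crossing the $C$-arc, is a loop at $p_i$. It is therefore homotopic to a product of the generating loops $\gamma_{ij}$ and their inverses. By the lemma at level $q$, $\eta_q^\alpha(w_\ell)$ is congruent to a product of the $\eta_q^\alpha(w_{ij})^{\pm1}$, and these commute with $R_i$ modulo $V_{(q)}$. Unwinding $w_\ell$ in terms of $v_A$, $C^{\e}$ and $v_B$ (including the $R_i$-power correction) gives the desired congruence at level $q+1$. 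Here one uses the standard fact that if $x\equiv y$ mod $F_q$, then $x^{-1}R_ix\equiv y^{-1}R_iy$ mod $F_{q+1}$. One also needs the relation subgroup $V$ at level $q$ to be carried into the one at level $q+1$ modulo $F_{q+1}$, which I would check directly from the inductive definition of the Chen maps.

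\emph{Main obstacle.} The delicate point is this simultaneous induction. One must make sure that the congruences proved modulo $F_q\cdot V_{(q)}$ upgrade to congruences modulo $F_{q+1}\cdot V_{(q+1)}$ after conjugating $R_i$. This requires $\eta_{q+1}^\alpha(w_{ij})\equiv\eta_q^\alpha(w_{ij})$ mod $F_q$ and careful bookkeeping of the $R^{-|\gamma|}$ normalizations. In the $1$--dimensional case with the canonical road network this is much simpler, since the roads are nested and only case (a) and the loop-longitudes along circle components occur.
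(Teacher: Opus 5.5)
Your proposal is correct and is essentially the argument behind the cited \cite[Prop.~6.8]{AMY}: reduce the homotopy to local moves and check each move through the recursive definition of the Chen maps, the same pattern visible in the paper's proof of Lemma~\ref{lem:bienlasoupe}. The induction on $q$ rests exactly on the two facts you isolate, $\eta_{q+1}\equiv\eta_q \bmod F_q$ and $V_{(q)}\subseteq F_{q+1}V_{(q+1)}$; the only point to tighten is case (c), since $\widetilde w_\gamma$ does not record the regions traversed, so the inserted letter $A^{\pm1}$ is absorbed by applying your key claim at each crossing of the initial segment $\gamma_1$ (from the starting region $R$ to $A$), giving $\eta_q(A)\equiv\eta_q(\widetilde w_{\gamma_1})^{-1}\eta_q(R)\,\eta_q(\widetilde w_{\gamma_1})$ modulo $F_qV_{(q)}$, which exactly compensates the shift $R^{-|\gamma|}\to R^{-|\gamma|\mp1}$ (alternatively, once Step~2 shows that $\eta_q$ descends to a homomorphism $G(\UU)\to F/F_qV_{(q)}$, you can skip Step~1 and use $w_\gamma=w_{\gamma'}$ in $G(\UU)$ from \cite[Lem.~2.8]{AMY}).
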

\begin{lem}\label{lem:rewrite}\cite[Lem.~6.10~\&~Cor.~6.11]{AMY}\,
Let $\alpha$ and $\alpha'$ be two choices of road networks for $\UU$ and let $\{R_i\}$ and $\{R'_i\}$, respectively, be the corresponding sets of meridians.\footnote{For each component, the road network is based at some region, which is the corresponding meridian.}
Consider, for each $i$, the road $a_i$ in $\alpha'$ running to $R_i$, and let $\nu_i=\widetilde{w}_{a_i}\in \overline{F}$ be the associated word.  
For any $w\in \overline{F}$, and any $q\ge 1$, the word $\eta^{\alpha'}_q(w)$ in $\{R'_i\}$ is obtained from the word $\eta^{\alpha}_q(w)$ in $\{R_i\}$ by replacing each $R_i$ by $\eta^{\alpha'}_q(\nu_i)^{-1}R'_i\eta^{\alpha'}_q(\nu_i)$.
\end{lem}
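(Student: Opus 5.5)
We would prove the statement by induction on $q$, reformulating it as an identity between homomorphisms. For each $q\ge1$ let $\varphi_q:F\to F'$ be the homomorphism from the free group on $\{R_i\}$ to the free group on $\{R'_i\}$ given by
\[
\varphi_q(R_i):=\eta^{\alpha'}_q(\nu_i)^{-1}R'_i\,\eta^{\alpha'}_q(\nu_i).
\]
The claim is that $\varphi_q\circ\eta^{\alpha}_q=\eta^{\alpha'}_q$ on $\overline{F}$. We expect this only modulo $F'_q\cdot V'_{(q)}$, which is the level at which these words are used in Theorem~\ref{thm:alaChen}, and we would read the statement in that sense. Since both sides are homomorphisms, it suffices to check the identity on the generators $R_{ij}$.

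For $q=1$, $\eta^{\alpha}_1(R_{ij})=R_i$ and $\eta^{\alpha'}_1(R_{ij})=R'_i$. Moreover $\eta^{\alpha'}_1(\nu_i)$ is a word in the meridians $R'_k$, so $\varphi_1(R_i)$ is a conjugate of $R'_i$ and agrees with $R'_i$ modulo $F'_1$. Hence the base case holds.

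For the inductive step, write $v_{ij}$ and $v'_{ij}$ for the words of the roads to $R_{ij}$ in $\alpha$ and $\alpha'$. Unwinding the definitions gives
\[
\varphi_{q+1}\big(\eta^\alpha_{q+1}(R_{ij})\big)=\Big(\eta^{\alpha'}_{q+1}(\nu_i)\,\varphi_{q+1}\big(\eta^\alpha_q(v_{ij})\big)\Big)^{-1}R'_i\,\Big(\eta^{\alpha'}_{q+1}(\nu_i)\,\varphi_{q+1}\big(\eta^\alpha_q(v_{ij})\big)\Big).
\]
We must compare this with $\eta^{\alpha'}_q(v'_{ij})^{-1}R'_i\,\eta^{\alpha'}_q(v'_{ij})$. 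Conjugating the generator $R'_i$ by an element of $F'_q$ only changes it modulo $F'_{q+1}$, so it is enough to show
\[
\eta^{\alpha'}_{q+1}(\nu_i)\,\varphi_{q+1}\big(\eta^\alpha_q(v_{ij})\big)\equiv \eta^{\alpha'}_q(v'_{ij})
\]
modulo $F'_q\cdot V'_{(q)}$, up to left multiplication by a power of $R'_i$, which commutes with $R'_i$. We establish this in three moves.
\begin{itemize}
\item[(i)] Since $\varphi_{q+1}$ and $\varphi_q$ agree modulo $F'_q$, the induction hypothesis gives $\varphi_{q+1}(\eta^\alpha_q(v_{ij}))\equiv\eta^{\alpha'}_q(v_{ij})$ at this level.
\item[(ii)] Similarly, $\eta^{\alpha'}_{q+1}(\nu_i)\equiv\eta^{\alpha'}_q(\nu_i)$ modulo $F'_q$.
\item[(iii)] The concatenated path $a_i\cdot\alpha_{ij}$ has word $\nu_i v_{ij}$ and runs from $p'_i$ to $R_{ij}$, as does $\alpha'_{ij}$. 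When $M_i$ is an interval, or in the $2$--dimensional case on a simply connected piece, the two paths are homotopic relative to their endpoints. In general they differ by a based loop, which contributes a loop-longitude $w_{ij}$; after applying $\eta_q$ and conjugating $R'_i$, this is absorbed into $V'_{(q)}$. Lemma~\ref{eq:alaChen} then identifies $\eta^{\alpha'}_q(w_{a_i\alpha_{ij}})$ with $\eta^{\alpha'}_q(w_{\alpha'_{ij}})$. Passing between $\widetilde w$ and $w$ only introduces powers of the starting region $R'_i$, which disappear once we conjugate $R'_i$.
\end{itemize}

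The main obstacle is the bookkeeping of congruence levels in the inductive step. We must check that every discrepancy, whether it comes from $F'_q$ or from $V'_{(q)}$, appears only inside a conjugator of $R'_i$, so that it is pushed into $F'_{q+1}\cdot V'_{(q+1)}$. For the $V'_{(q)}$ terms this needs the fact that $V'_{(q)}$ is compatible with passing from $\eta_q$ to $\eta_{q+1}$ modulo $F'_{q+1}$, and we would prove this alongside the main induction. If that compatibility proved delicate, we would first do the case where $M$ consists only of intervals, where no longitude relations occur, and then treat the circle and surface components by checking the loop contributions separately.
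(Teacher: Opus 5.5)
The paper gives no proof of this lemma; it imports it from \cite[Lem.~6.10~\&~Cor.~6.11]{AMY}. Your induction on $q$ is therefore a self-contained replacement, and it is sound.

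Reading the statement as a congruence modulo $F'_q\cdot V'_{(q)}$ is not just convenient but forced. Already for $q=1$, $\varphi_1(R_i)=\eta^{\alpha'}_1(\nu_i)^{-1}R'_i\,\eta^{\alpha'}_1(\nu_i)$ is in general a nontrivial conjugate of $R'_i=\eta^{\alpha'}_1(R_{ij})$ in the free group. Moreover, the loop discrepancy in your step (iii) is exactly what $V'_{(q)}$ is there to absorb. This is also precisely the form in which the lemma is used in Section~\ref{sec:on}, where only images in $N_qG(\UU)$ matter.

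Your reduction of the inductive step to a comparison of conjugators of $R'_i$ is correct, and so are steps (i)--(ii), once you record (by a one-line induction) that $\eta_{q+1}\equiv\eta_q \bmod F_q$. Since $F'_qV'_{(q)}$ is normal, the position of the error terms is irrelevant. What your route buys over the bare citation is transparency: it shows the lemma is a formal consequence of Lemma~\ref{eq:alaChen} together with $\eta_{q+1}\equiv\eta_q$, and it makes the congruence level explicit.

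Two points need tightening.

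First, in (iii) the discrepancy is an arbitrary loop $\ell$ based at $p'_i$ (after joining endpoints inside regions), not a single generating loop-longitude. Also, $\eta^{\alpha'}_q(\widetilde w_\ell)$ is not itself in $V'_{(q)}$. What you actually need is that conjugating $R'_i$ by it is trivial modulo $F'_{q+1}V'_{(q+1)}$. To get this:
\begin{itemize}
\item replace $\eta^{\alpha'}_q(\widetilde w_\ell)$ by $\eta^{\alpha'}_{q+1}(\widetilde w_\ell)$, at a cost in $F'_{q+1}$;
\item show $[R'_i,\eta^{\alpha'}_{q+1}(w_\ell)]\in F'_{q+1}V'_{(q+1)}$ for every loop $\ell$ at $p'_i$.
\end{itemize}
The second item follows from $\widetilde w_{\ell_1\ell_2}=\widetilde w_{\ell_1}\widetilde w_{\ell_2}$ and $\widetilde w_{\ell^{-1}}=\widetilde w_\ell^{-1}$. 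It also uses the fact that $w_\ell$ and $\widetilde w_\ell$ differ by a left power of $R'_i$, and Lemma~\ref{eq:alaChen} applied at level $q+1$ to reduce to products of the generating loops.

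Second, the compatibility you worry about is immediate. If $b\equiv b'\bmod F'_q$, then $[R'_k,b]\equiv[R'_k,b']\bmod F'_{q+1}$, so $V'_{(q)}\subseteq F'_{q+1}V'_{(q+1)}$. Hence $z^{-1}Wz\equiv W\bmod F'_{q+1}V'_{(q+1)}$ for every conjugate $W$ of $R'_i$ and every $z\in F'_qV'_{(q)}$. In particular, your fallback of treating interval components separately is unnecessary.
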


\subsection{Nilpotent peripheral system}

We now review the notion of (nilpotent) peripheral system for cut-diagrams. 
Since this will only be used here in the $1$--dimensional case, we restrict ourselves to this simpler case. So let $\CC$ be a cut-diagram over a compact oriented $1$-manifold $X=X_1\cup \cdots \cup  X_n$. 

As above, pick for each $i$ a basepoint $p_i$ in the interior of some region of $X_i$ (recall  from Remark \ref{rem:long1} that this choice is canonical for each interval component of $X$); this specifies a choice $\{R_i\}$ of meridians for $\UU$.
For each $i$ such that $X_i\cong S^1$, consider the associated loop-longitude $\{\lambda_i\}$ for $X_i$ as in Remark \ref{rem:long1}. 
For each $i$ such that $X_i\cong  [0,1]$, we likewise consider the word $\lambda_i:= w_{\gamma^\partial_{i}}$ in $G(\UU)$ associated with the arc $\gamma^\partial_{i}$ running along $X_{i}$.

The following is a reformulation of \cite[Def.~2.15]{AMY} in the $1$--dimensional case. 
\begin{defi}\label{def:pereiferic}
A \emph{peripheral system} for $\UU$ is the data $\big(G(\UU);\{R_i,\lambda_{i}\}\big)$, 
associated with a choice $\{R_i\}$ of meridians. \\
Two peripheral systems 
$\big(G;\{R_i,\lambda_{i}\}\big)$ and $\big(G';\{R'_i,\lambda'_{i}\}\big)$ are \emph{equivalent} if there is a group isomorphism $\rho:G\rightarrow G'$ and elements $\{g_i\}$ of $G'$ such that, for all $i,j$: 
\[
\rho\left(R_i\right)=g_i^{-1}  R'_i g_i\quad;\quad \rho\left(\lambda_{i}\right) =\left\{
 \begin{array}{cl}
  g_i^{-1} \lambda'_{i} g_i & \textrm{if $X_i\cong  S^1$}\\
  \lambda'_{i} & \textrm{if $X_i\cong  [0,1]$}\end{array}.\right. \]
\end{defi}
Note that two equivalent cut-diagrams have equivalent peripheral systems. 

For each $q\ge 1$, we can likewise define the \emph{$q$-th nilpotent peripheral system} of $\UU$ as the data, associated to a peripheral system $\big(G(\UU);\{R_i,\lambda_{i}\}\big)$, given by the nilpotent quotient $N_qG(\UU)$ of $G(\UU)$, together with the images of all $R_i$ and $\lambda_{i}$ in $N_qG(\UU)$. Equivalent nilpotent peripheral systems are defined just as in Definition \ref{def:pereiferic}. 

As a continuation to Proposition \ref{prop:ortion}, we note the following easy, yet important fact. 
\begin{prop}\label{prop:retnet}
For a tangle $T$, and for any $q\ge 1$, the (topologically defined) $q$-th nilpotent peripheral system is equivalent to the (diagrammatically defined) $q$-th nilpotent peripheral system of any  associated cut-diagram $\Xi_1(T)$. 
\end{prop}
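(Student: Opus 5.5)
The plan is to combine Proposition \ref{prop:ortion} with the observation that the combinatorial peripheral elements are exactly the topological ones under the Wirtinger isomorphism. After that, the statement follows by passing to nilpotent quotients. Fix a diagram $D$ of $T$, so that $\Xi_1(T)$ is represented by $C_D$. Proposition \ref{prop:ortion} gives an isomorphism $\phi\colon G(C_D)\to\pi_1(B^3\setminus T)$ sending each region (an overarc of $D$) to its Wirtinger meridian. Here the Wirtinger generator of an overarc is a loop running from a basepoint above the diagram straight down to that arc, encircling it, and returning. Choose the topological basepoint high above the projection plane. The $i$-th topological meridian is then the Wirtinger generator of the region $R_i$ containing $p_i$, so $\phi(R_i)$ is the preferred meridian.

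Next I would check the longitudes. Run along $X_i$ from $p_i$, following the orientation, and push this path slightly off $T$ to get a parallel curve. Each time the path passes under an overarc labeled $R_k$, i.e. each time it crosses a cut-point of sign $\varepsilon_k$, the pushed-off curve picks up the Wirtinger generator $R_k^{\varepsilon_k}$. Hence the parallel is represented by $\phi(\widetilde w_{\gamma_i})$, where $\gamma_i$ is the closed loop along $X_i$ for a circle component and the arc $\gamma_i^\partial$ for an interval component. The framing correction $R^{-|\gamma|}$ multiplies by a power of the meridian so that the total exponent sum of the $i$-th generators vanishes. This is exactly the linking-number zero condition characterizing the preferred longitude. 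For a circle component the result is a based preferred longitude. For an interval component, both endpoints lie on the boundary sphere, so the arc is closed up canonically on $\partial B^3$ and no conjugation ambiguity arises, matching Definition \ref{def:pereiferic}.

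It remains to handle choices and pass to quotients. The topological peripheral system is defined up to exactly the equivalence of Definition \ref{def:pereiferic}: conjugating by the $g_i$ accounts for the choice of basepoint paths. Changing the diagram of $T$ changes $C_D$ by topological moves, and Proposition \ref{prop:iniscent} (together with the remark that equivalent cut-diagrams have equivalent peripheral systems) shows the result is independent of this choice. Finally, the isomorphism $\phi$ maps the lower central series term $G(C_D)_q$ onto $\pi_1(B^3\setminus T)_q$, since an isomorphism preserves lower central series. It therefore descends to an isomorphism $N_qG(C_D)\to N_q\pi_1(B^3\setminus T)$ carrying images of meridians and longitudes to each other, still with the same conjugating elements.

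I expect the main obstacle to be the longitude identification: checking the sign convention and the orientation of the Wirtinger loops so that the word read off really equals the pushed-off parallel, and not its inverse or a conjugate. I would verify this on a single crossing of each sign, using the labeling convention of Figure \ref{fig:adj}.
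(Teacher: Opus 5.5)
Your proposal is correct and follows the route the paper has in mind. The paper gives no separate proof: it calls the statement an \emph{easy} consequence of the Wirtinger isomorphism of Proposition \ref{prop:ortion}. The only extra ingredients are the ones you supply, namely that $w_\gamma=R^{-|\gamma|}\widetilde{w}_\gamma$ is the Wirtinger word of the preferred (zero exponent-sum) push-off, and that isomorphisms pass to the quotients $N_q$; your treatment of basepoint and diagram choices fills in routine details the paper leaves implicit.
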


\section{Cut-concordance and main invariance result} \label{sec:Concordance}

We can now introduce the main new notion of this paper, cut-concordance, which is an equivalence relation for $1$--dimensional cut-diagrams generalizing the classical topological relation of concordance.  

\subsection{Cut-concordance}

Let $X$ be a compact oriented $1$-manifold.
Roughly speaking, two ($1$--dimen\-sio\-nal) cut-diagrams $\UU_0$ and $\UU_1$ over $X$ are cut-concordant if there is a $2$--dimensional cut-diagram over $X\times[0,1]$ which intersects $\Sigma\times\{0\}$ and $\Sigma\times\{1\}$ as  $\UU_0$ and $\UU_1$, respectively. 
\begin{defi}\label{def:kut}
  Two cut-diagrams $\UU_0$ and $\UU_1$ over $X$ are
  \emph{cut-concordant} if there is a cut-diagram $\UU$ over $X\times[0,1]$, and   homeomorphisms 
 $f_\e:  \left(X,\UU_\e\right)\rightarrow \left(X\times \{\e\},\UU\cap\big(X\times\{\e\}\big)\right)$ ($\e\in\{0,1\}$) satisfying the two following compatibility rules:
  \begin{description}
   \item[Orientation] a cut-point $p$ of $\UU_0$ (resp. $\UU_1$) is mapped to an inward-oriented cut-arc endpoint if and only if $p$ has positive sign (resp. negative sign);
    \item [Labeling] the following diagram commutes for $\e\in\{0,1\}$:
\[
\xymatrix@!0 @R=1.5cm @C=5cm{
\big\{\textrm{cut-points of $\UU_\e$}\big\} \ar[r]^(.52){\textrm{labeling}} \ar[d]^(.45){\psi_\e} & \big\{\textrm{regions of
  $\UU_\e$}\big\}\ar[d]^(.45){\phi_\e}\\
\big\{\textrm{cut-arcs of $\UU$}\big\} \ar[r]^(.52){\textrm{labeling}} & \big\{\textrm{regions of $\UU$}\big\}\\
}.
\]
Here $\psi_\e$ maps a cut-point $p$ of $\UU_\e$ to the unique cut-arc of $\UU$ containing $f_\e(p)$; likewise, 
$\phi_\e$ maps a region $r$ of $\UU_\e$ to the unique region of $\UU$ that contains $f_\e(r)$. 
\end{description}
Moreover, if $\p X\neq\emptyset$, we also require that
$\UU\cap\big(\p X\times[0,1]\big)\cong f_\e\big(\UU_0\cap\p X\big)\times[0,1]$.   \\
We say that $\UU$ is a \emph{cut-concordance} between $\UU_0$ and $\UU_1$.
\end{defi}
\begin{convention}
In figures, we shall still represent cut-arcs of a cut-concordance $\UU$ in red, and we will depict in blue the $1$--dimensional cut-diagrams $\UU_\e$ on $X\times \{\e\}$, by identifying $\UU_\e$ and $f_\e(\CC_\e)$. 
\end{convention}

\begin{lem}\label{lem:mings}
Two equivalent cut-diagrams are cut-concordant.  
\end{lem}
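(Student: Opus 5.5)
The natural strategy is to handle one topological move at a time and then glue the pieces. Cut-concordance is reflexive, since a product cylinder $\UU_0\times[0,1]$ over $X\times[0,1]$ is a cut-concordance. It is also transitive: two cut-concordances over $X\times[0,1]$ can be stacked along their common boundary. The compatibility rules fit together because a cut-point of the middle diagram is outward-oriented for the lower piece exactly when it is inward-oriented for the upper piece. Regions of the two pieces that meet along $X\times\{1\}$ merge into a single region, and labels are transported through $\phi_\e$. The labeling rules of Definition \ref{def:cut} are local and already hold in each piece. It therefore suffices to show that if $\UU_1$ is obtained from $\UU_0$ by a single move R1, R2 or R3 of Figure \ref{fig:IsotopyDim1}, then $\UU_0$ and $\UU_1$ are cut-concordant.

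For each move, start from the product cut-diagram $\UU_0\times[0,1]$ away from a small interval $J\subset X$ supporting the move. Over $J\times[0,1]$, insert a local $2$--dimensional picture that interpolates between the two local $1$--dimensional pictures. These pictures are the standard broken-surface movies of the Reidemeister moves: a movie of a Reidemeister move is a surface diagram in $\R^3$, and its lower point set gives a local cut-diagram on $J\times[0,1]$.

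\begin{description}
\item[R1] A single cut-arc emanates from a univalent vertex (a branch point) in the interior and exits through one boundary side. By the second labeling rule its label is the region containing the vertex, which matches the self-labeled cut-point of R1.
\item[R2] The two oppositely signed cut-points, both labeled $C$, are joined by a $C$-labeled cut-arc forming a cap (a U-shaped arc) with both ends on the same boundary side. The sign and orientation rule matches because the signs are opposite. The middle region $D$ of the move becomes part of a region adjacent to $A$ and $B$. One has to check that the region identifications agree with the relabeling convention of R2: the regions $A$ and $B$ on one side are separated on $X\times\{\e\}$ but merge around the cap, which is consistent with all $A$/$B$ labels becoming $A$ on the other side. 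The hypothesis that $D$ labels no cut-point is exactly what is needed here, since $D$ disappears as a region on the other boundary.
\item[R3] There are three cut-arcs crossing each other as a triple-point movie, i.e. two arcs traversing $J\times[0,1]$ with a crossing, where the under arc changes label across the crossing. The first labeling rule must be checked at that crossing, and it should reduce to the relation encoded in R3.
\end{description}

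Finally, one checks that the global labels outside $J\times[0,1]$ remain consistent. Labels elsewhere that refer to regions split or merged by the move are assigned the merged region of $\UU$, and the commuting square in Definition \ref{def:kut} holds because $\phi_\e$ sends both $A$ and $B$ to the same region of $\UU$. For interval components, the boundary condition holds automatically, since the modification is supported away from $\p X$.

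The main obstacle is the R3 case together with the region bookkeeping in R2. For R3, one must produce a concrete local cut-diagram, with orientations and signs, that satisfies the first labeling rule at its crossing. The plan is to read it off the Roseman/Carter–Saito movie of the Reidemeister III move, or equivalently to take the trace in $4$-space of a classical R3 isotopy and apply $\Xi_2$ locally; this automatically yields a valid cut-diagram. The remaining work is then only to match the labeling convention for non-topological labels, which is fine because the labeling rules are purely local.
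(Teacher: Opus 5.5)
Your proposal is correct and is essentially the paper's argument: reduce to a single move R1, R2 or R3 (you make explicit the stacking/transitivity step that the paper leaves implicit), then insert into a product cylinder the local pieces of Figure~\ref{fig:svmoves}, namely an arc ending at a univalent vertex, a cap, and a crossing. The only imprecision is your claim that the labeling rules are purely local. The first rule requires $A$ and $B$ to be adjacent along a $C$-labeled cut-arc; this adjacency survives stacking because merging regions preserves adjacency, and in the R3 piece it is supplied by the product strip over the middle strand, where $A$ and $B$ are separated by a $C$-labeled cut-point.
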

\begin{proof}
Since two equivalent cut-diagrams are related by a sequence of the three topological moves R1, R2 and R3 of Figure \ref{fig:IsotopyDim1}, which are local, it suffices to give a local description of the cut-concordance associated with each of these moves. 
Figure \ref{fig:svmoves} represents (portions of) cut-concor\-dan\-ces between  cut-diagrams that differ, from left to right, by a topological move R1, R2 and R3 move. 
\begin{figure}[h]
  \includegraphics[scale=0.67]{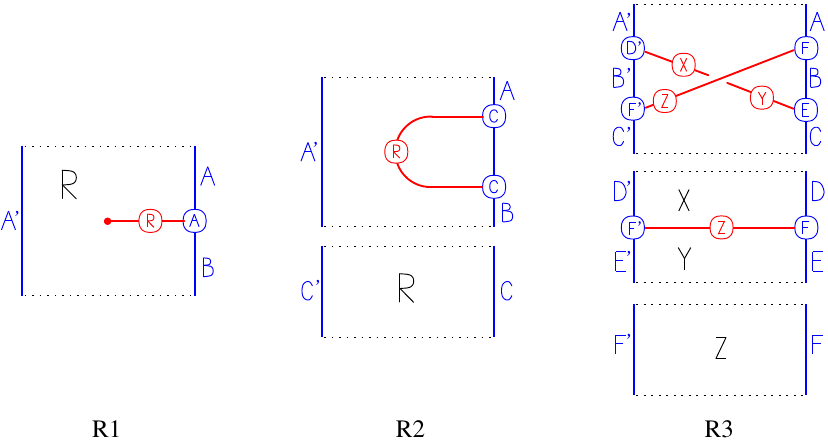}
  \caption{Cut-concordances for moves R1, R2 and R3.\\{\footnotesize Any coherent choice of orientations and signs is allowed}}
  \label{fig:svmoves}
\end{figure}
\end{proof}

Cut-concordance defines a natural equivalence relation on $1$--dimensional cut-diagrams, thus providing a coherent theory for studying these objects.  
Moreover, this notion generalizes the topological notion of concordance in the following sense.
\begin{prop}\label{prop:ConcordanceSameSame}
Two topological cut-diagrams associated to two concordant tangles are cut-concordant.
\end{prop}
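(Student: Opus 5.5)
The idea is to realize a topological concordance as a surface-link in $B^4$ and read off its $2$--dimensional cut-diagram via a broken surface diagram.

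Let $T_0,T_1$ be concordant tangles with skeleton $X$. There is a proper smooth embedding $A\colon X\times[0,1]\hookrightarrow B^3\times[0,1]$ with $A(X\times\{\e\})=T_\e\times\{\e\}$ that is a product near $\p X\times[0,1]$. Fix diagrams $D_0,D_1$ of $T_0,T_1$, given by projecting along the last coordinate of $B^3$. By Lemma~\ref{lem:mings}, and since cut-concordance is transitive (glue along a common boundary), it suffices to show that $C_{D_0}$ and $C_{D_1}$ are cut-concordant.

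\textbf{Step 1: put the concordance in good position.} Perturb $A$ rel boundary so that, in collars $B^3\times[0,\delta]$ and $B^3\times[1-\delta,1]$, it is the product $T_\e\times[\ldots]$. Then apply the generic projection $B^3\times[0,1]\to \mathbb{D}^2\times[0,1]$ forgetting the same coordinate used for $D_0,D_1$. The result is a broken surface diagram of the annuli in which the slices $t=\e$ are exactly $D_\e$. Near the ends the double-point set is $\{\textrm{crossings of }D_\e\}\times[\ldots]$, and there are no triple or branch points there. Genericity away from the collars is obtained by a small perturbation rel collars, so the double curves are transverse to the boundary.

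\textbf{Step 2: read off the cut-diagram.} Apply the construction of Section~\ref{sec:kut2} on $\Sigma=X\times[0,1]$ to obtain a $2$--dimensional cut-diagram $\UU$. The labeling rules hold automatically, exactly as in the closed case, since the local models of Figure~\ref{fig:local} are unchanged.

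\textbf{Step 3: check the boundary compatibility of Definition~\ref{def:kut}.} Take $f_\e$ to be the identity inclusions. In the collar, the lower preimage of each double line $\{c\}\times[\ldots]$ is (lower preimage of $c$)$\times[\ldots]$, so $\UU\cap (X\times\{\e\})$ is exactly the set of cut-points of $C_{D_\e}$. A region of $C_{D_\e}$ lies in a unique region of $\UU$, and the over-strand preimage of $c$ is the slice of the over-sheet. Hence the labels match, i.e.\ $\phi_\e\circ\textrm{label}=\textrm{label}\circ\psi_\e$. The boundary condition over $\p X\times[0,1]$ holds because the concordance is a product there. The \emph{Orientation} rule is the one remaining check: compare the orientation convention for double lines (over-normal, under-normal, tangent positively oriented in $\R^3$) with the crossing sign in $D_\e$. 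At $t=0$ the outward normal of $\Sigma$ is $-\p_t$ and at $t=1$ it is $+\p_t$, which is why positive cut-points correspond to inward-pointing arcs at one end and negative ones at the other.

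I expect this last sign verification to be the main obstacle, though it is a local computation in the model $\R^2\times[0,1]$ with two transverse sheets. One needs to fix orientation conventions consistently; if the computation gives the opposite sign, the discrepancy should be absorbed by the convention implicit in Definition~\ref{def:kut}, which is presumably set up precisely so that this holds. The genericity in Step 1 also needs care, but it is standard transversality rel boundary.
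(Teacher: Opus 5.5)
Your proposal is correct and is essentially the paper's proof. You project a concordance generically to $\mathbb{D}^2\times[0,1]$, read off the induced $2$--dimensional cut-diagram, and use Lemma~\ref{lem:mings} to pass from the boundary slices to the given cut-diagrams. The gluing/transitivity you invoke is also used implicitly by the paper, and your collar set-up just makes explicit what the paper leaves implicit.

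The orientation check you left open does go through, so no appeal to "absorbing a discrepancy" is needed. Use the product orientation on $\mathbb{D}^2\times[0,1]$. In the collar, the positive normals $n_o,n_u$ of the over- and under-sheets are obtained from the planar tangents $o,u$ by the same quarter turn, so $\det(n_o,n_u)=\det(o,u)$. Hence the double line points along $+\partial_t$ exactly at positive crossings. That direction is inward at $t=0$ and outward at $t=1$, which is what Definition~\ref{def:kut} requires.
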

\begin{proof} 
Let  $\UU_0$ and $\UU_1$ be two $1$--dimensional topological cut-diagrams representing two tangles $T_0$ and $T_1$ in $B^3$. Let $W\subset B^3\times [0,1]$ be a concordance between $T_0$ and $T_1$. 
Projecting onto $B^2\times [0,1]$ generically produces a surface diagram $D_W$ for $W$  such that $D_W\cap (B^2\times \{\varepsilon\})$ is a diagram for $T_\varepsilon$ ($\varepsilon=0,1$).
Following the procedure associating a topological cut-diagram to a surface diagram (Section \ref{sec:kut2}), we obtain a cut-concordance between two $1$--dimensional cut-diagrams that are equivalent to $\UU_0$ and $\UU_1$, respectively. The conclusion then follows by Lemma \ref{lem:mings}.  
\end{proof}
In particular, two topological cut-diagrams associated to ambient isotopic tangles, relative to boundary, are cut-concordant.

However, in strong contrast with the usual  notion of concordance, it turns out that two $1$-component cut-diagrams are always cut-concordant. Indeed, we have the following.
\begin{lem}\label{lem:cutslice}
Let $X$ be either a circle or an interval. Any cut-diagram over $X$ is cut-concordant to the empty one. 
\end{lem}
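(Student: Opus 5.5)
The plan is to build an explicit $2$--dimensional cut-diagram $\UU$ over $X\times[0,1]$ that restricts to the given diagram $\UU_0$ on $X\times\{0\}$ and has no cut-points on $X\times\{1\}$. The key feature of a single component is that every region of $X\times[0,1]$ is a region of the same connected surface. Hence every label belongs to that component, and we have room to remove cut-points by arcs that close up inside the surface.

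Concretely, the construction goes as follows.
\begin{itemize}
\item[1)] Pair up the cut-points would be ideal, but their number and signs are arbitrary. So instead, for each cut-point $p$ of $\UU_0$ we let a cut-arc start at $f_0(p)$ and run vertically into $X\times[0,1]$. It then ends at a univalent vertex in the interior of the annulus or square.
\item[2)] Orient the arc inward or outward according to the sign of $p$, as required by the Orientation rule.
\item[3)] Since there are no crossings, the first labeling rule is vacuous. The only constraint is the second rule: an arc with a univalent vertex in region $A$ must be labeled $A$.
\end{itemize}
When $\UU$ is the union of such disjoint arcs, the complement of the arcs in $X\times[0,1]$ is connected, since removing arcs that touch only one boundary component does not disconnect the surface. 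So there is a single region $A$. Every arc must then be labeled $A$, and the Labeling compatibility at $X\times\{0\}$ is automatic because $\phi_0$ sends every region of $\UU_0$ to $A$. The top $X\times\{1\}$ meets no arc, so it carries the empty diagram, as required.

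For $X$ an interval, I must also ensure the boundary condition $\UU\cap(\p X\times[0,1])\cong (\UU_0\cap \p X)\times[0,1]$. Cut-points lie in the interior of $X$, so this intersection is empty on both sides and the condition is trivially satisfied.

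The main obstacle, and the point to check carefully, is whether Definition~\ref{def:cut} allows labels that are not tied to anything, and whether univalent endpoints in the interior are permitted as genuine branch points with arbitrary orientation. The labeling rule for branch points fixes the label as the ambient region, which here is the unique region $A$, so it is consistent. If one were worried, one could alternatively pair consecutive cut-points by arcs, but the branch-point construction avoids any parity issue and is cleaner.

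This gives the cut-concordance to the empty cut-diagram.
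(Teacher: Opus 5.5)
Your proposal is correct and is essentially the paper's proof: the paper takes $\UU:=\UU_0\times[0,1/2]\subset X\times[0,1]$, i.e.\ a vertical cut-arc from each cut-point ending at an interior univalent vertex, oriented by the Orientation rule. As in your argument, all arcs are labeled by the unique region of the complement, so the Labeling rule holds trivially because $\phi_0$ sends every region of $\UU_0$ to that region.
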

\begin{proof}
Let $\UU_0$ be a cut-diagram over $X$.  
Then $\UU:=\UU_0\times [0,1/2]\subset \left(X\times [0,1]\right)$ simply defines a cut-concordance between $\UU_0$ and the empty cut-diagram, as follows. 
Each arc in $\UU$ is oriented according to the Orientation rule of Definition \ref{def:kut}, and is labeled by $R$, the unique region of $\left(X\times [0,1]\right)\setminus \UU$. This trivially fulfills the Labeling rule of Definition \ref{def:kut}, since $\phi_0$ maps all regions of $\UU_0$ to $R$.
\begin{figure} 
 \includegraphics[scale=0.6]{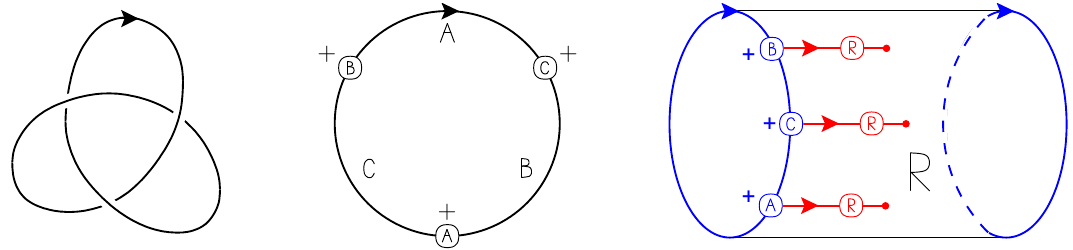}
  \caption{A trefoil diagram (left), the associated cut-diagram (center) and a cut-concordance between this cut-diagram and the empty one (right).}
\label{fig:tefolle}
\end{figure}
\end{proof}
An example is given in Figure \ref{fig:tefolle}, showing that the trefoil is cut-concordant to the unknot.

\subsection{Cut-concordance invariance result}\label{sec:ompletementfoireux}

It is well-known that equivalent classes of nilpotent peripheral systems and Milnor invariants
of link in $S^3$ are concordance invariants \cite{casson}.
This remains true for cut-diagrams. Indeed, we have the following.
\begin{theo}
\label{th:NilpotentConcordance}
  Equivalence classes of nilpotent peripheral systems for $1$--dimensional cut-diagrams are invariant under cut-concordance.
\end{theo}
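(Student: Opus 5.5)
The approach is an algebraic, Stallings-type argument carried out entirely through the presentations of Theorem~\ref{thm:alaChen}. Let $\UU$ be a cut-concordance over $X\times[0,1]$ between $\UU_0$ and $\UU_1$, and fix $q\ge1$. I plan to show that, for $\e\in\{0,1\}$, the natural map $G(\UU_\e)\to G(\UU)$ induced by $\phi_\e$ on regions descends to an isomorphism $N_qG(\UU_\e)\cong N_qG(\UU)$. This isomorphism should send $R_i$ to a conjugate of a meridian of $\UU$ and $\lambda_i$ to the corresponding conjugate of a fixed element of $N_qG(\UU)$, namely the class of the path running along $X_i\times\{0\}$ (for $X_i\cong[0,1]$ the conjugating element should moreover be trivial). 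Composing the isomorphism for $\e=0$ with the inverse of the one for $\e=1$ then gives an equivalence of $q$-th nilpotent peripheral systems in the sense of Definition~\ref{def:pereiferic}.

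\textbf{Step 1: the map is well defined.} Send each region $r$ of $\UU_\e$ to $\phi_\e(r)$. A relation $B^{-1}C^{-1}AC$ at a $C$-labeled cut-point $p$ of $\UU_\e$ is sent to the relation of $G(\UU)$ coming from the cut-arc $\psi_\e(p)$. Its label is $\phi_\e(C)$ by the Labeling rule, and the roles of $A$ and $B$ (which side is which) match by the Orientation rule. Hence the assignment extends to a homomorphism $G(\UU_\e)\to G(\UU)$.

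\textbf{Step 2: matching Chen maps.} Take basepoints $p_i\times\{0\}$ for $\UU$, with the same meridians $R_i$ as for $\UU_0$, and choose a road network $\alpha$ for $\UU$ that extends the canonical road network of $\UU_0$ (Remark~\ref{rem:ontheroadagain}). To do this, push the roads into $X\times(0,\delta)$, where every cut-arc meets them transversally exactly as the corresponding cut-points do. Then add roads to the remaining regions of $\UU$ arbitrarily. With these choices the road words satisfy $v_{ij}^{\UU}=\phi_0(v_{ij}^{\UU_0})$ in $\overline F$. An induction on $q$, using Definition~\ref{def:alaChen}, then gives
\[
\eta_q^{\alpha}\circ\phi_0=\eta_q^{\UU_0}\quad\text{on the free group generated by the regions of }\UU_0.
\]
In particular $\eta_q^\alpha(w_{\lambda_i})=\eta_q(\lambda_i^{\UU_0})$ in the same free group $F=\langle R_1,\ldots,R_n\rangle$. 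The free group $F$ is the same for $\UU_0$ and $\UU$ because $X\times[0,1]$ has the same components as $X$.

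\textbf{Step 3: comparing the relations and concluding.} By Theorem~\ref{thm:alaChen}, $N_qG(\UU)$ is presented by $F_q$ together with $[R_i,\eta_q^\alpha(w_{ij})]$, where $w_{ij}$ ranges over loop-longitudes of $X_i\times[0,1]$. If $X_i$ is a circle, $\pi_1$ of the annulus is generated by the loop along $X_i\times\{0\}$, giving exactly the relations of $\UU_0$. If $X_i$ is an interval, there are no such relations on either side. So the presentations of $N_qG(\UU_0)$ and $N_qG(\UU)$ coincide, and Step~1 induces the identity on $F$, hence an isomorphism carrying $\lambda_i$ to $\eta_q^\alpha(w_{\lambda_i})$.

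For $\e=1$ I use the canonical network of $\UU_1$ on $X\times\{1\}$, extended to a network $\alpha'$ for $\UU$. The same argument identifies $N_qG(\UU_1)$ with $N_qG(\UU)$ via $\alpha'$, provided the relations agree. This holds because the loop along $X_i\times\{1\}$ is homotopic, after conjugating by a vertical arc, to the loop along $X_i\times\{0\}$. By Lemma~\ref{eq:alaChen}, the Chen images of these loops then agree modulo $F_q\cdot V_{(q)}$ up to that conjugation, and $V_{(q)}$ consists of the relations themselves. I then pass from $\alpha'$ back to $\alpha$ with Lemma~\ref{lem:rewrite}. This replaces $R_i$ by $\eta_q(\nu_i)^{-1}R'_i\eta_q(\nu_i)$, which supplies the conjugating elements $g_i$, and the same conjugation appears on the longitudes.

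\textbf{Main obstacle: interval components.} Here no conjugation of $\lambda_i$ is allowed, and it is not enough that $\gamma^\partial_i\times\{0\}$ and $\gamma^\partial_i\times\{1\}$ become homotopic rel boundary after adding the vertical sides $\p X_i\times[0,1]$. I must also check that these side segments contribute trivially, both to the words $w_\gamma$ and to the change of basepoint. The boundary condition $\UU\cap(\p X\times[0,1])\cong(\UU_0\cap\p X)\times[0,1]$ means the sides meet no transverse cut-arc. So the side segments should contribute the empty word, and the canonical basepoints, together with their roads $\nu_i$, should be joined inside a single region near $\p X\times[0,1]$. Making this bookkeeping precise, so that $g_i=1$ holds for interval components, is the step I expect to require the most care.
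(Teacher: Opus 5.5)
\textbf{There is a genuine gap in your Step~2.} A road network has exactly one road per region of $\UU$. So you cannot choose $\alpha$ ``extending'' the canonical network of $\UU_0$ with $v^{\UU}_{ij}=\phi_0(v^{\UU_0}_{ij})$ for \emph{every} region of $\UU_0$ unless $\phi_0$ is injective, that is, unless each region of $\UU$ meets $X\times\{0\}$ in a single region of $\UU_0$. Definition~\ref{def:kut} does not guarantee this; in Lemma~\ref{lem:cutslice}, for instance, all regions of $\UU_0$ go to one region. When injectivity fails, the identity $\eta^\alpha_q\circ\phi_0=\eta^{\UU_0}_q$ in $F$ is false. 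Here is an example.
\begin{itemize}
\item Let $X$ be three circles, with $X_2$ and $X_3$ carrying no cut-points (regions $E$ and $K$).
\item Let $X_1$ carry cut-points $p_1,p_2,p_3$ labeled $E^{+},K^{+},K^{-}$ in cyclic order, with regions $A_0$ (basepoint), $A_1$, $A_2$ between them.
\item Take $\UU$ to be the product on $X_2$ and $X_3$. On $X_1\times[0,1]$, take a $K$-labeled arc from $p_2$ to $p_3$ and an $E$-labeled arc from $p_1$ up to $X_1\times\{1\}$.
\end{itemize}
Then $\phi_0(A_0)=\phi_0(A_1)$. On the other hand, $\eta^{\UU_0}_q(A_0)=R_1$ and $\eta^{\UU_0}_q(A_1)=R_2^{-1}R_1R_2$ for all $q\ge 2$. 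These agree only modulo the longitude relation $[R_1,R_2]$. So no choice of $\alpha$ gives your identity, and your conclusion that the two presentations literally coincide does not follow.

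\textbf{The missing idea is the paper's preliminary modification of $\UU$ by topological moves}, made before any road network is chosen. For interval components, an R1 kink plus a T5 move near $\{0\}\times[0,1]$ isolates the basepoint region; this is needed because the region containing the vertical side may also contain other regions of $\UU_\e$. At every region $A$ of $\UU_\e$ bounded by two cut-points, the \emph{duplication process} is applied: a T1 move, T3 kinks on the $\phi_\e(A)$-labeled arcs issuing from $A$-labeled cut-points, then two T5 moves. After these moves, $\phi_\e$ is a bijection onto the regions of $\UU$ adjacent to $\UU_\e$. By Proposition~\ref{prop:iniscent} this does not change $G(\UU)$. Only then do pushed-in roads form an honest road network for which the Chen maps, and hence the presentations, agree exactly.

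Alternatively, you could keep $\UU$ unmodified and claim only congruences modulo $F_q\cdot V_{(q)}$. In that case you would still have to prove, for instance by induction on $q$, that the normal subgroups generated by the longitude relations of $\UU_0$ and of $\UU$ coincide. That is the injectivity content of Proposition~\ref{rem:Stallings}, and your proposal does not address it.

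Once this is repaired, the rest of your argument is sound and parallels the paper's Section~\ref{sec:on}. That includes your Step~1, the conjugating elements from Lemma~\ref{lem:rewrite}, the longitudes via Lemma~\ref{eq:alaChen}, and the observation that the vertical sides carry no cut-arcs. The interval components are not the main difficulty.
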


Before proceeding with the proof of this result in Section \ref{sec:onsamusebiennon}, 
let us note the following consequence. 
Recall that Milnor invariants are  (classical) link invariants originally defined in \cite{Milnor,Milnor2}, which are in some sense  \lq higher order linking numbers\rq \, extracted from the nilpotent peripheral systems of links. These invariants were adapted to string links by Habegger and Lin, under the form of certain automorphisms of the nilpotent quotients of the free group \cite{HL}. 
Both constructions were later extended to welded links and string links  \cite{ABMW,Chrisman,MWY}, that is to $1$--dimensional cut-diagrams over a union of circles or intervals. As their classical counterparts, these \emph{welded Milnor invariants} only depend on the nilpotent peripheral systems.  
\begin{cor}\label{cor:MilnorConcordance}
  Welded Milnor invariants are invariant under cut-concordance.
\end{cor}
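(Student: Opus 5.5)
Given a cut-concordance $\UU$ over $X\times[0,1]$ between $\UU_0$ and $\UU_1$, the plan is to compare both nilpotent peripheral systems with the nilpotent quotient of the $2$--dimensional cut-diagram $\UU$ itself, in the spirit of Stallings' theorem. The inclusions $f_\e$ (through the labeling maps $\phi_\e$, $\psi_\e$) induce homomorphisms $\iota_\e:G(\UU_\e)\to G(\UU)$ sending regions to regions. These are well defined because a $1$--dimensional relation $B^{-1}C^{-1}AC$ at a $C$-labeled cut-point of $\UU_\e$ is sent to the relation coming from the cut-arc $\psi_\e(p)$ separating $\phi_\e(A)$ and $\phi_\e(B)$, labeled $\phi_\e(C)$ by the Labeling rule. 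The Orientation rule ensures that the sign conventions match; the reversed sign convention on $X\times\{1\}$ accounts for the opposite boundary orientation. We then show that each $\iota_\e$ induces an isomorphism $N_qG(\UU_\e)\to N_qG(\UU)$ compatible with peripheral data. The composition $(N_q\iota_1)^{-1}\circ N_q\iota_0$ will then be the required equivalence.

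For the isomorphism we use Theorem \ref{thm:alaChen} on both sides. Fix for each component $X_i$ a basepoint $p_i$ in $X_i\times\{0\}$. Its region is the meridian $R_i$ for both $\UU_0$ and $\UU$, and we choose a road network for $\UU$ extending the canonical road network of $\UU_0$ along $X_i\times\{0\}$. With this choice the Chen maps of $\UU$ restricted to regions of $\UU_0$ agree with those of $\UU_0$, so both $N_qG(\UU_0)$ and $N_qG(\UU)$ are quotients of $N_qF$ on the same generators $R_1,\dots,R_n$, and $N_q\iota_0$ is the identity on generators. The relations differ only by the longitude relations.
\begin{itemize}
\item For an interval component, $X_i\times[0,1]$ is a disk and contributes no loop-longitude.
\item For a circle component, the annulus $X_i\times[0,1]$ has $\pi_1$ generated by the core $X_i\times\{0\}$, whose word is exactly $\lambda_i$ of $\UU_0$.
\end{itemize}
Hence the presentations coincide and $N_q\iota_0$ is an isomorphism carrying $R_i$ to $R_i$ and $\lambda_i$ to $\lambda_i$.

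For $\e=1$, take a path $\delta_i=\{x_i\}\times[0,1]$ from $p_i$ to a basepoint $p'_i$ on $X_i\times\{1\}$. Lemma \ref{lem:rewrite}, changing the road network to one based at $p'_i$ and extending the canonical one of $\UU_1$, gives a second presentation of $N_qG(\UU)$ identical to that of $N_qG(\UU_1)$. Consequently $N_q\iota_1$ is an isomorphism as well. Under the identification, the meridians $R'_i$ correspond to $g_i^{-1}R_ig_i$ with $g_i=\eta_q(w_{\delta_i})$. For circle components, the longitude $\lambda'_i$ is the core of the annulus pushed to level $1$; it is freely homotopic to $\lambda_i$ via $\delta_i$, so Lemma \ref{eq:alaChen} identifies it with the conjugate of $\lambda_i$ by $g_i$ modulo $F_q\cdot V_{(q)}$. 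For interval components, the boundary condition $\UU\cap(\p X\times[0,1])\cong(\UU_0\cap \p X)\times[0,1]$ lets us take $\delta_i$ along the boundary, where no cut-arcs are crossed. Then $g_i$ is trivial, and the arc $\gamma^\partial_i$ at level $1$ is homotopic rel boundary to $\delta_i^{-1}\gamma^\partial_i\delta_i$ at level $0$, so $\lambda'_i=\lambda_i$ exactly, as Definition \ref{def:pereiferic} requires.

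The main obstacle is the bookkeeping in this last step. It requires checking carefully that the normalized words $w_\gamma$ (with the $R^{-|\gamma|}$ correction) behave well under the homotopy across the annulus. It also requires checking that the sign reversal imposed by the Orientation rule at $\e=1$ makes $\iota_1$ a homomorphism and not an anti-version of one. I would first verify both points locally near a single cut-arc endpoint on $X\times\{1\}$, and then apply Lemma \ref{eq:alaChen}.
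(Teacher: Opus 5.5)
Your route is the paper's proof of Theorem \ref{th:NilpotentConcordance}: identical presentations from Theorem \ref{thm:alaChen} via compatible road networks, then Lemmas \ref{lem:rewrite} and \ref{eq:alaChen} for the peripheral elements. The corollary itself is then the remark that welded Milnor invariants depend only on the nilpotent peripheral system, which you never state. More importantly, your reproof has a genuine gap at the step ``choose a road network for $\UU$ extending the canonical road network of $\UU_0$''.

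A road network has exactly one road per region of $\UU$. The map $\phi_0$ from regions of $\UU_0$ to regions of $\UU$ is in general far from injective: in the cut-concordance of Lemma \ref{lem:cutslice}, every region of $\UU_0$ lies in the single region of $\UU$. If $\phi_0(C)=\phi_0(C')$ with $C\neq C'$, the road to that region can be the push-in of at most one of the two $\UU_0$-roads. Then $\eta_q^{\alpha(\UU)}(\phi_0(C'))$ and $\eta_q^{\alpha(\UU_0)}(C')$ are in general different conjugates of $R_i$ in $F$, and the Chen image of the pushed-in longitude is no longer the $\UU_0$ one.

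The fact that $\iota_0$ is a homomorphism does not rescue this. It gives a well-defined surjection $N_q\iota_0$. Injectivity, however, amounts to showing that $\big[R_i,\eta^{\alpha(\UU)}_q(\phi_0(w_i))\big]$ is trivial in $N_qG(\UU_0)$ for each circle component, where $w_i$ is the $\UU_0$-longitude word. This Chen map involves roads through the interior of $\UU$, and that is the actual content of the theorem. For instance, it would force $C=C'$ in $N_qG(\UU_0)$ whenever $\phi_0(C)=\phi_0(C')$. The same problem arises at level $1$ with $\phi_1$.

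The missing idea is to first modify $\UU$ by topological moves, which leave $G(\UU)$ unchanged by Proposition \ref{prop:iniscent}, so that $\phi_0$ becomes a bijection onto the regions of $\UU$ adjacent to $\UU_0$. This is the duplication process of Figure \ref{fig:theonlyidea}. Take a region $A$ of $\UU_0$ between two cut-points, and set $R=\phi_0(A)$.
\begin{enumerate}
\item A T1 move creates a small $R$-labeled cut-arc near $A$.
\item T3 kinks are inserted on the $R$-labeled cut-arcs ending at $A$-labeled cut-points.
\item Two T5 moves split off a new region $R'$ meeting $\UU_0$ only along $A$.
\item The kinked cut-arcs are relabeled $R'$. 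This keeps both the labeling rules of Definition \ref{def:cut} and the Labeling compatibility rule.
\end{enumerate}
Interval components also need the R1/T5 trick of Figure \ref{fig:trick} near $\{0\}\times[0,1]$. Only after this, and the same at level $1$, do compatible road networks exist; this is Claim \ref{claim:entine}. The rest of your argument then goes through essentially as in the paper, including your treatment of the conjugating elements $g_i$ and of interval longitudes via the boundary condition.
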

\begin{rem}\label{rem:ilnor}
If $T_1$ and $T_2$ are two tangles such that $\Xi_1(T_1)$ and $\Xi_1(T_2)$ are cut-concordant, then $T_1$ and $T_2$ have same Milnor invariants by Proposition \ref{prop:retnet}. In this sense, we can say that classical Milnor invariants are invariant under cut-concordance.
\end{rem}

\subsection{Proof of Theorem \ref{th:NilpotentConcordance}}\label{sec:onsamusebiennon}

Suppose that $\UU$ is a cut-concordance between two cut-diagrams $\UU_0$ and $\UU_1$ over a compact oriented $1$-manifold $X=X_1\cup \cdots \cup X_n$. 
We will freely use the notation from Definition \ref{def:kut}. 

\subsubsection{Nilpotent quotients}\label{sec:ool}
Let us first show that, for any given $q\ge 1$, the nilpotent quotients $N_qG(\UU_0)$ and $N_qG(\UU_1)$ are both isomorphic to $N_qG(\UU)$. 

Our strategy is as follows. 
First, we modify $\UU_0$, $\UU_1$ and $\UU$ by topological moves, such that for each component of $X$, there is a one-to-one correspondence between the regions of $\UU_\varepsilon$ and the regions of $\UU$ intersecting $\CC_\varepsilon$ ($\varepsilon=0,1$). 
Next, we pick \lq compatible\rq\, road networks for $\UU_\varepsilon$ and $\UU$, and representatives for the loop-longitudes, such that the Chen maps give the exact same presentations for $N_qG(\UU_\varepsilon)$ and $N_qG(\UU)$ by Theorem \ref{thm:alaChen}. 
\medskip

We first focus on regions of $\CC_0$ which intersects $\partial X$.  
More precisely, for each $i$ such that $X_i\cong [0,1]$, consider the region $R$ of $\UU$ containing $\{0\}\times [0,1]$. The region $R$ intersects $\UU_0$ and $\UU_1$ at regions  $A$ and $B$ respectively, see the left-hand side of Figure \ref{fig:trick}.  
Apply the ($1$--dimensional) topological move R1 of Figure \ref{fig:IsotopyDim1} at both regions $A$ and $B$, and modify $\UU$ accordingly by adding $R$-labeled cut-arcs, as shown in the center of Figure \ref{fig:trick}. We still denote by $\CC_0$ and $\CC_1$ the resulting $1$--dimensional cut-diagrams, which are indeed equivalent to the previous ones. The resulting $2$--dimensional cut-diagram clearly still defines a cut-concordance, with group isomorphic to $G(\CC)$; we shall hence still denote it by $\UU$. 
Finally, apply move T5 to $\CC$ as represented on the right-hand side of Figure \ref{fig:trick}. The resulting cut-concordance between $\CC_0$ and $\CC_1$ (still denoted $\UU$) now has a new region $R'$  containing $\{0\}\times [0,1]$, 
such that $R'\cap \UU_\e$ is a single region of $\UU_\e$  ($\e\in\{0,1\}$). 
By Proposition \ref{prop:iniscent}, it is enough to show the result for these new $\CC_0$, $\CC_1$ and $\CC$. 
\begin{figure} 
  \[
    \begin{array}{ccccc}
      \dessin{1.6cm}{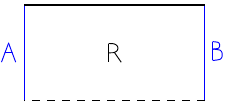} & \xrightarrow{\text{\large $\textrm{ }$}} & 
            \dessin{1.6cm}{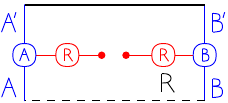} & \xrightarrow{\text{\large T5}}
           &  \dessin{1.6cm}{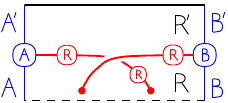}
    \end{array}
    \]  \caption{Modifying $\CC$ near the boundary component $\{0\}\times [0,1]$.\\{\footnotesize 
The thick black line is the boundary $\{0\}\times [0,1]$; cut-point signs and cut-arc orientations can be chosen in any coherent way 
}}
\label{fig:trick}
\end{figure}
\medskip

We next focus on regions of $\UU_0$ that are incident to two cut-points, and begin with setting some notation. 
Denote by $\mathcal{R}_0$ and $\mathcal{R}_\C$ the set of regions of $\UU_0$ and $\UU$, respectively. 
Let $\mathcal{R}^0_\UU\subset \mathcal{R}_\UU$ be the set of regions of $\UU$ intersecting 
$\phi_0(\mathcal{R}_0)$. We shall say that regions in $\mathcal{R}^0_\UU$ are \emph{adjacent to $\UU_0$}. 
Following our strategy, we further modify the cut-concordance $\UU$ into an equivalent one, which we shall still denote by $\UU$, such that the map $\phi_0: \mathcal{R}_0\rightarrow \mathcal{R}_{\UU}$ provides a bijection between $\mathcal{R}_0$ and $\mathcal{R}^0_{\UU}$.  
\\
Consider a region $A$ of $\CC_0$ which is incident to two cut-points, and denote by $R$ the adjacent region of $\CC$. Note that $A$ is any region of $\CC_0$, except those regions containing a point in $\partial X_i\times \{0\}$.  
Apply the following \emph{duplication process}, which is illustrated in Figure \ref{fig:theonlyidea}: 
\begin{description}
\item[Step 1] Apply move T1 in $R$ near $A$, introducing a small $R$-labeled cut-arc with two internal vertices; 
\item[Step 2] For each $R$-labeled cut-arc of $\UU$ that is incident to an $A$-labeled cut-point of $\CC_0$, apply move T3 to insert a small kink as shown in the middle of the figure; 
\item[Step 3] Apply two T5 moves as illustrated: this operation splits $R$ into two regions, denoted by $R$ and $R'$, with $R'$ now intersecting $A$ instead of $R$. Following Section \ref{sec:kut2.2}, we can relabel by $R'$ all $R$-labeled cut-arcs that are incident to an $A$-labeled cut-point of $\CC_0$ so that the labeling rules of Definition \ref{def:cut} are fulfilled.
\end{description}
\begin{figure} 
  \[
    \begin{array}{ccccc}
      \dessin{5.5cm}{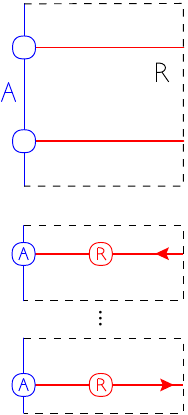} & \xrightarrow{\text{\large Step 1 \& 2}} & 
            \dessin{5.5cm}{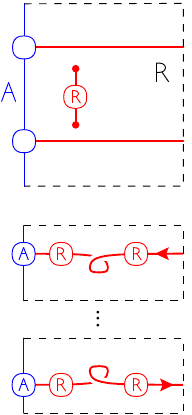} & \xrightarrow{\text{\large Step 3}}
           &  \dessin{5.5cm}{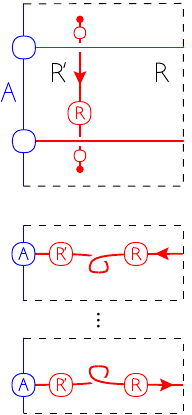}
    \end{array}
    \]  \caption{Duplicating a region of $\UU$ adjacent to $\UU_0$.\\{\footnotesize 
    The figures represents portions of a neighborhood of $\CC_0$ where the local moves are performed, and only relevant labels and orientations are indicated 
}}
\label{fig:theonlyidea}
\end{figure}
Using this duplication process repeatedly at all regions of $\UU_0$ that are incident to two cut-points, we obtain a cut-concordance between $\UU_0$ and $\UU_1$, still denoted by $\UU$, such that $\phi_0: \mathcal{R}_0\rightarrow \mathcal{R}^0_{\UU}$ is bijective 
(that is, each region of $\UU_0$ has a unique adjacent region in $\UU$), and all $\phi_0(A)$-labeled cut-arcs of $\UU$ adjacent to $\UU_0$ intersect $\UU_0$ at $A$-labeled cut-points.
Note that by Proposition \ref{prop:iniscent}, the nilpotent quotient $N_qG(\UU)$ is unchanged by this duplication process. 

Denote by $\overline{F}_0$ and $\overline {F}$ the free groups generated by all regions of $\UU_0$ and $\UU$, respectively. 
For each $i$, 
fix a basepoint $p_i$ in the interior of some region $r_i^0$ of $\CC_0\cap X_i$ (recall from Remark \ref{rem:long1} that this choice is canonical if $X_i\cong [0,1]$).  
Since $p_i$ also lies in a region $R_i^0=\phi_0(r^0_i)$ on the $i$-th component of $\UU$, we can identify these regions $r_i^0=R_i^0$, and denote by $F$ the free group generated by these $n$ elements. 

We next fix road networks for $\UU_0$ and $\UU$.
As noted in Remark \ref{rem:ontheroadagain}, there is a canonical road network $\alpha(\UU_0)$ for $\UU_0$.  
We then pick a road network $\alpha_0(\UU)$ for $\UU$ which is \lq compatible\rq\, with $\alpha(\UU_0)$: each road $\alpha_{ij}$ running to a region $R_{ij}\in \mathcal{R}^0_\CC$ 
is a small push-in of the corresponding road $\alpha^0_{ij}$ in $\alpha(\UU_0)$ running to $\phi_0^{-1}(R_{ij})$. 
Therefore the word $\widetilde{w}_{\alpha_{ij}}\in \overline{F}$ associated with $\alpha_{ij}$ is exactly obtained from the word in $\overline{F_0}$ associated with $\alpha^0_{ij}$ by replacing each letter by its image by $\phi_0$.
Consequently we have :
\begin{claim}\label{claim:entine}
 For all $q\ge 1$, and any $r\in \overline{F}_0$, we have $\eta^{\alpha(\UU_0)}_{q}(r) = \eta^{\alpha_0(\UU)}_{q}(\phi_0(r))\in F$. 
\end{claim}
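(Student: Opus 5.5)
We prove the claim by induction on $q$, after reducing it to a statement about generators. Both $\eta^{\alpha(\UU_0)}_q$ and $\eta^{\alpha_0(\UU)}_q\circ\phi_0$ are homomorphisms $\overline{F}_0\to F$. Here $\phi_0$ is extended letterwise to a homomorphism $\overline{F}_0\to\overline{F}$; its image lies in the subgroup generated by $\mathcal{R}^0_\UU$. It therefore suffices to check the equality on each generator of $\overline{F}_0$, that is, on each region $r_{ij}$ of $\UU_0$. We write $R_{ij}=\phi_0(r_{ij})$. By the duplication process, $\phi_0$ is a bijection onto $\mathcal{R}^0_\UU$, so $R_{ij}$ is a well-defined region of $\UU$ lying on the $i$-th component $X_i\times[0,1]$. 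Its road in $\alpha_0(\UU)$ is the push-in of the road $\alpha^0_{ij}$ to $r_{ij}$.

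For $q=1$, both sides send $r_{ij}$ to the generator indexed by its component. Indeed $\eta^{\alpha(\UU_0)}_1(r_{ij})=r^0_i$ and $\eta^{\alpha_0(\UU)}_1(R_{ij})=R^0_i$, and these are identified in $F$. For the basepoint regions, $\phi_0(r^0_i)=R^0_i$, so both sides equal the identified generator at every level $q$.

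For the inductive step, assume the identity holds at level $q$ for all elements of $\overline{F}_0$. Let $v^0_{ij}=\widetilde{w}_{\alpha^0_{ij}}\in\overline{F}_0$ and $v_{ij}=\widetilde{w}_{\alpha_{ij}}\in\overline{F}$. By the compatibility of the road networks, $v_{ij}=\phi_0(v^0_{ij})$. This needs the observation already made before the claim: the push-in $\alpha_{ij}$ meets exactly the cut-arcs of $\UU$ emanating from the cut-points of $\UU_0$ crossed by $\alpha^0_{ij}$, in the same order. The signs agree by the Orientation rule, and the labels agree by the Labeling rule, since each such cut-arc is labeled $\phi_0$ of the label of the corresponding cut-point. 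Then
\[
\eta^{\alpha_0(\UU)}_{q+1}(\phi_0(r_{ij}))=\eta^{\alpha_0(\UU)}_q(v_{ij})^{-1}R^0_i\,\eta^{\alpha_0(\UU)}_q(v_{ij})
=\eta^{\alpha(\UU_0)}_q(v^0_{ij})^{-1}r^0_i\,\eta^{\alpha(\UU_0)}_q(v^0_{ij}),
\]
where the second equality uses $v_{ij}=\phi_0(v^0_{ij})$ and the induction hypothesis. The last expression is $\eta^{\alpha(\UU_0)}_{q+1}(r_{ij})$ by Definition \ref{def:alaChen}, which completes the induction.

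The only delicate point is the identity $v_{ij}=\phi_0(v^0_{ij})$ as words, not merely as group elements. Two things are needed for it. First, the push-in must be taken close enough to $X\times\{0\}$ that it crosses no other cut-arc and no crossing; this is possible by compactness and genericity. Second, the orientation convention must make the sign of each intersection point equal to the sign of the corresponding cut-point. This should follow directly from the Orientation rule of Definition \ref{def:kut}, together with the convention for $\widetilde{w}_\gamma$ applied to a path parallel to $X\times\{0\}$.
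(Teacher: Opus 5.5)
Your proof is correct and follows the paper's argument. The paper notes that, for the compatible road network, each road word $\widetilde{w}_{\alpha_{ij}}$ is obtained letterwise from $\widetilde{w}_{\alpha^0_{ij}}$ by applying $\phi_0$, and leaves implicit the induction on $q$ through the recursive definition of the Chen maps that you write out. Strictly, the role of the duplication process is injectivity of $\phi_0$: it is what makes the compatible road network well defined, since $\UU$ carries only one road per region.
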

Now, each loop-longitude of $\UU_0$ is represented by a (based) copy of the corresponding $S^1$-component of $X$. This yields, after a small push-in, a (based) loop $\lambda^0_{i}$ in $X_i\times [0,1]$ representing a loop-longitude of $\CC$. Claim \ref{claim:entine} implies directly that these systems of loop-longitudes for $\UU_0$ and $\UU$, have the same image in $F$ by the Chen maps.
The statement of Theorem \ref{thm:alaChen} then provides the exact same presentation for $N_qG(C_0)$ and for $N_qG(C)$ for all $q$.
We hence have an isomorphism 
 \[ \rho_0: N_qG(\UU_0)\stackrel{\simeq}{\longrightarrow} N_qG(\UU)\quad ; \quad r_i^0\mapsto R_i^0. \]

Now, applying the same duplication process at all regions of $\CC_1$ that are adjacent to two cut-points, we likewise obtain an isomorphism 
$\rho_1: N_qG(\UU_1)\stackrel{\simeq}{\longrightarrow} N_qG(\UU)$. 
We can summarize the situation so far as follows: 
\begin{prop}\label{rem:Stallings}
Let $\CC$ be a cut-concordance between two $1$--dimensional cut-diagrams $\UU_0$ and $\UU_1$. 
Then for all $q\ge 1$ we have isomorphisms 
 \[ N_qG(\UU_0)\stackrel{\simeq}{\longrightarrow} N_qG(\UU)\stackrel{\simeq}{\longleftarrow} N_qG(\UU_1). \]
\end{prop}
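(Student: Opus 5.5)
The plan is to assemble the argument just carried out for $\UU_0$ and run it symmetrically for $\UU_1$. First, use the boundary trick of Figure~\ref{fig:trick} for each interval component. It replaces $\UU_0,\UU_1,\UU$ by equivalent cut-diagrams and a new cut-concordance, which we still call $\UU_0,\UU_1,\UU$. In the new cut-concordance, the region containing $\{0\}\times[0,1]$ meets each $\UU_\e$ in a single region. By Proposition~\ref{prop:iniscent} this changes none of the groups up to isomorphism, so it suffices to prove the statement for the modified diagrams.

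Second, apply the duplication process (Steps 1--3) at every region of $\UU_0$ incident to two cut-points. Then $\phi_0:\mathcal R_0\to\mathcal R^0_\UU$ becomes a bijection, and every $\phi_0(A)$-labeled cut-arc near $\UU_0$ meets $\UU_0$ only at $A$-labeled cut-points. Again by Proposition~\ref{prop:iniscent}, $G(\UU)$ is unchanged up to isomorphism.

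Third, choose the canonical road network $\alpha(\UU_0)$ and a compatible road network $\alpha_0(\UU)$ obtained by pushing roads slightly into $X\times[0,1]$. The labeling rule of Definition~\ref{def:kut} and the orientation rule ensure two things: crossing a pushed-in road through a cut-arc contributes the letter $\phi_0(\text{label})$, and it carries the same sign as the corresponding cut-point. This gives Claim~\ref{claim:entine}. The loop-longitudes of $\UU_0$, pushed in, are loops in $X_i\times[0,1]$ generating $\pi_1(X_i\times[0,1])$, so they form a system of loop-longitudes for $\UU$ in the sense of Definition~\ref{def:longitudes}. For interval components there is nothing to check, since $X_i\times[0,1]$ is simply connected. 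Theorem~\ref{thm:alaChen}, applied with these choices (Remark~\ref{rem:oulade}), then yields literally identical presentations of $N_qG(\UU_0)$ and $N_qG(\UU)$ on the generators $r_i^0=R_i^0$. This gives $\rho_0$.

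Fourth, repeat the third step at the $\e=1$ end. Duplicate the regions of $\UU_1$, which is a local modification near $X\times\{1\}$ and so does not disturb what was arranged near $X\times\{0\}$. Choose basepoints in $\UU_1$, a compatible road network $\alpha_1(\UU)$, and pushed-in longitudes. The same argument gives an isomorphism $N_qG(\UU_1)\cong N_qG(\UU)$ with respect to different meridians $R_i^1$. The two presentations of $N_qG(\UU)$ use different road networks, but both are valid presentations of the same group, so composing gives the stated zig-zag of isomorphisms.

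The step I expect to be the main obstacle is the sign/orientation compatibility behind Claim~\ref{claim:entine}. One must check that the sign of a cut-point of $\UU_1$ matches the sign of the intersection of the pushed-in road with the cut-arc. This is why the orientation rule is reversed at $\e=1$: the boundary orientations of $X\times\{0\}$ and $X\times\{1\}$ differ. I would verify it by a local picture of a cut-arc ending on $X\times\{\e\}$.
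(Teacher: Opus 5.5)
Your proposal is correct and follows the paper's proof essentially step by step: the boundary trick, the duplication process making $\phi_0$ bijective, compatible push-in road networks yielding Claim~\ref{claim:entine}, identical presentations from Theorem~\ref{thm:alaChen}, and then the same construction at the $\varepsilon=1$ end. The paper leaves implicit the sign compatibility you single out as the main obstacle, and your explanation is the right one: the orientation rule is reversed at $X\times\{1\}$ because the inward normal there points in the opposite direction.
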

As outlined in the introduction, this provides a combinatorial proof of Stallings' Theorem \cite[Thm.~5.2]{Stallings} in the case of a topological concordance between two tangles, and a generalization to cut-concordances.
 
 \subsubsection{Peripheral systems}\label{sec:on}

It now remains to prove that this isomorphism 
\[ \rho:=\rho_1^{-1}\circ \rho_0:  N_qG(\UU_0)\stackrel{\simeq}{\longrightarrow} N_qG(\UU_1) \]
provides an equivalence of nilpotent peripheral systems, in the sense of Definition \ref{def:pereiferic}. 

By Section \ref{sec:ool}, $N_qG(\UU)$ is given two presentations, which coincide with a presentation for $N_qG(\UU_0)$ and $N_qG(\UU_1)$, respectively. 
In particular, for $\e\in\{0,1\}$, the chosen meridians $r^\e_i$ of $\UU_\e$ coincide with  regions $R^\e_i$ of $\CC$ that are adjacent to $\UU_\e$. 
Moreover, each of these two presentations is associated with a road network $\alpha_\e(\UU)$ for  the cut-concordance $\UU$ ($\e=0,1$).
Consider for each $i$ the road $a_i$ in $\alpha_1(\UU)$ running from $R^1_i$ to $R^0_i$ in $\CC$. By Lemma \ref{lem:rewrite}, we have 
\[ \rho(R^0_i) = g_i^{-1} R^1_i g_i\textrm{, \,  where $g_i:=\eta^{\alpha_1(\UU)}_q(\widetilde{w}_{a_i})$.}  \]

We next observe that these conjugating elements $g_i$ also relate the systems of loop-longitu\-des of $\UU_0$ and $\UU_1$. 
As noted above, each loop-longitude of $\UU_\e$ is represented by a (based) copy of the corresponding $S^1$-component of $X$ and gives, after a small isotopy, a loop $\lambda^\e_{i}$ in $X_i\times [0,1]$ based at region $R^\e_i$. Hence $\lambda^0_{i}$ is homotopic, relative to basepoint, to $a_i^{-1}\lambda^1_{i}a_i$: the result then follows from the homotopy invariance property of the Chen map $\eta^{\alpha_1(\UU)}_q$ of Lemma \ref{eq:alaChen}. 

Finally, for each $i$ such that $X_i\cong  [0,1]$, the corresponding peripheral element $(\lambda_i)_\e$ of $\UU_\e$ is represented by an arc $\iota^\e_{i}$ which is a copy of $X_i$ ($\e=0,1$). 
Since by assumption we have that $\UU\cap\big(\p X\times[0,1]\big)\cong \big(\UU_0\cap\p X\big)\times[0,1]$, we immediately have that $\iota^0_{i}$ and $\iota^1_{i}$ are homotopic relative to boundary. Lemma \ref{eq:alaChen} then implies that peripheral  elements $(\lambda_i)_0$ and $(\lambda_i)_1$ are in bijection through the isomorphism $\rho$. 

This completes the proof that $\UU_0$ and $\UU_1$ have equivalent nilpotent peripheral systems. 

\section{Welded concordance, self-virtualization, and cut-diagrams}\label{sec:arrementnimp}

In this final section, we investigate the relationship between cut-concordance and other equivalence relations that appear in diagrammatic knot theory. 

\subsection{Welded concordance vs. cut-concordance}\label{sec:4.1}

The notion of welded concordance was introduced in \cite{BC,Gaudreau}, as a diagrammatic  extension of the classical notion of concordance to welded tangles.
Two welded tangles $L$ and $L'$ are \emph{welded concordant} 
 if these virtual diagrams are related by a sequence of welded Reidemeister moves, and the birth/death and saddle moves of Figure \ref{fig:concmoves} such that, for each $i$, the number of birth/death moves used to deform the $i$-th component of $L$
into the $i$-th component of $L'$ is equal to the number of saddle moves.
\begin{figure}[!h]
 \includegraphics[scale=0.9]{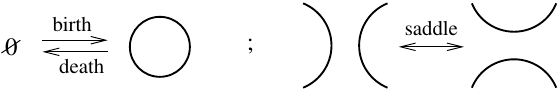}
 \caption{The birth/death and saddle moves}\label{fig:concmoves}
\end{figure}

Gaudreau proved that any welded (long) knot is welded concordant to the trivial one \cite[Thm.~5.1]{Gaudreau}. This is reminiscent of Lemma \ref{lem:cutslice} on cut-concordance, which is indeed a generalization of Gaudreau's result by the following. 
\begin{lem}\label{prop:wctocut}
 Two $1$--dimen\-sio\-nal cut-diagrams associated to welded concordant welded tangles are cut-concordant.
 \end{lem}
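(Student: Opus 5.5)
The plan is to build the cut-concordance locally, move by move, following the proof of Lemma~\ref{lem:mings}. A welded concordance between $L$ and $L'$ is a finite sequence of welded Reidemeister moves, birth/death moves and saddle moves. Cut-concordance is transitive, since two cut-concordances can be stacked along the common $1$--dimensional cut-diagram; after rescaling the $[0,1]$ factor, the Labeling rule is preserved because the regions glue. It therefore suffices to treat three cases separately: a single welded Reidemeister move, a single birth/death move, and a single saddle move. The counting condition on births and saddles is what forces the resulting surface to be $X\times[0,1]$, and it will only enter at the end.

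Welded Reidemeister moves translate into topological moves, or into no change at all, on the associated $1$--dimensional cut-diagrams. Virtual moves, the Mixed move and OC do not alter the Gauss diagram up to Tail-Commute. In that case the cut-diagrams are equivalent and Lemma~\ref{lem:mings} applies directly.

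For a birth, the surface piece is a disk cap. For a saddle, it is a pair of pants, that is, a band. In each case I will put on this piece the product cut-diagram $\UU_\e\times[0,t]$ away from the critical level. At the critical level, the new circle (for a birth) or the band core region (for a saddle) lies in a region. Since the move is performed away from crossings, no cut-arcs are created there, and the cut-arcs coming from cut-points on the other components simply continue through. Both labeling rules of Definition~\ref{def:cut} hold trivially, because no new crossings or endpoints of cut-arcs appear. It remains to check the Labeling compatibility. The only subtle point is the saddle, where two regions of the $1$--dimensional diagram may merge into a single region of the surface. Every cut-point labeled by either of them then gets labeled by the merged region, and this is consistent with $\phi_\e$.

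Concatenating these pieces gives a cut-diagram over a surface $\Sigma$ with boundary $X\times\{0,1\}$, carrying product structure along $\partial X\times[0,1]$. The main obstacle is to show that $\Sigma$ is homeomorphic to $X\times[0,1]$ componentwise, as Definition~\ref{def:kut} requires. For the $i$-th component, the equality between the number of births/deaths and the number of saddles gives Euler characteristic $0$. I also need each component of $\Sigma$ to meet both ends exactly in one circle or interval, so that it is an annulus or a disk strip. This is where I would use the definition of welded concordance, as in \cite{BC,Gaudreau}: the $i$-th component is tracked throughout, so $\Sigma_i$ is connected and has the right boundary, and Euler characteristic $0$ then forces an annulus (or a square). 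If the counting alone does not exclude components of higher genus combined with extra closed pieces, I would appeal to the convention of \cite{BC} that the cobordism is required to be a union of annuli.
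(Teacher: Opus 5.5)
Your argument is correct, and it rests on the same two ingredients as the paper's proof. Both take the trace of the movie. Both use the counting condition on births/deaths versus saddles, together with the per-component tracking implicit in the definition of welded concordance, to identify the resulting surface with $X\times[0,1]$; the paper relies on that tracking without comment, so your hedge about closed components is no weaker than the paper's argument.

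The difference is in how you produce the $2$--dimensional cut-diagram.

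\emph{The paper's route.} It takes the trace globally, as an immersed surface in $D^2\times[0,1]$ with classical and virtual lines of double points. It reads off the cut-diagram by discarding the virtual lines, and leaves the labeling checks as ``easily checked'', including at triple points where virtual lines meet.

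\emph{Your route.} You first pass through the correspondence between cut-diagrams and welded tangles, so each welded Reidemeister step is either no change or a topological move handled by Lemma~\ref{lem:mings}. You then build only the birth/death and saddle pieces by hand, as products with a cap or band containing no crossings. This makes the labeling checks genuinely trivial, at the cost of an explicit gluing step.

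One point to fix in the write-up: birth and saddle pieces change the skeleton, so they are not cut-concordances, and ``transitivity of cut-concordance'' is not the right reduction. What you actually use is gluing of $2$--dimensional cut-diagrams over cobordisms along a common $1$--dimensional cut-diagram. This works because the Orientation and Labeling rules agree on the two sides of each interface. A positive cut-point gives an upward-oriented cut-arc on both sides, and both adjacent arcs are labeled by images of the same region, which become one region after gluing. Your final concatenation paragraph already does exactly this.
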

 \begin{proof}
 Consider two welded concordant diagrams: there is a sequence of welded moves, birth/death and saddle moves deforming one into the other. 
Regarding each diagram in this sequence as a decorated\footnote{Here by decorated we mean that each transverse double point is decorated either as classical or virtual crossing. } proper immersion of some $1$-manifold $X$ in the $2$-disk $D^2$, we can take the \lq trace\rq\, of this sequence. By the combinatorial assumption on the number of birth/death and saddle moves, this trace yields an immersed surface $S:X\times [0,1]\hookrightarrow D^2\times [0,1]$,  with lines of double points decorated either as usual lines of double points or as a \lq virtual\rq\, line with no over/under information. 
This naturally produces a $2$--dimensional cut-diagram $\UU_S$ on $X\times [0,1]$, by considering the preimage of $S$ as in Section \ref{sec:kut2.2} and ignoring virtual lines of double points. It is easily checked that $\UU_S$ satisfies the compatibility rules of Definition \ref{def:kut}.
\end{proof}

\subsection{Self-virtualization and cut-diagrams}\label{sec:4.2}

Link-homotopy is an equivalence relation on links, and more generally on tangles,  introduced by Milnor \cite{Milnor2}. It is generated by ambient isotopy and \emph{self-crossing changes}, which are local moves exchanging the relative position of two strands of a same component, see Figure \ref{fig:SCSV}. 
\begin{figure}[h]
    \[
  \begin{array}{ccc}
     \dessin{1.5cm}{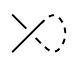}\ \longleftrightarrow
      \dessin{1.5cm}{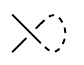}
      & &
     \dessin{1.5cm}{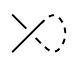}\ \stackrel{\textrm SV}{\longleftrightarrow}
      \dessin{1.5cm}{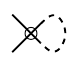} 
      \end{array} \]
  \caption{A self-crossing change (left) and a self-virtualization (right)}
  \label{fig:SCSV}
\end{figure}

When considering the wider context of welded tangles, it turns out that the relevant analogue of link-homotopy is the \emph{sv-equivalence}. Two welded tangles are sv-equivalent  if these virtual diagrams are related by welded Reidemeister moves and the \emph{self-vitualization (SV) move}, which is the local move that virtualizes a classical crossing between two strands of a same component, or vice-versa, see Figure \ref{fig:SCSV}.   Note that a self-crossing change is realized by two self-virtualization moves. 

\subsubsection{Reduced cut-concordance}\label{sec:redcut}\label{sec:4.2.1}

Considering the effect of a self-virtualization move at the cut-diagram level, leads to the following generalization of cut-concordance (see Lemma \ref{lem:svtocut} below). 
\begin{defi}\label{def:svcut}
 Let $\UU_0$ and $\UU_1$ be two cut-diagrams over a compact oriented $1$-manifold $X$. 
 Then $\UU_0$ and $\UU_1$ are \emph{reduced cut-concordant} if there is a cut-diagram $\CC$ over $X\times [0,1]$, satisfying the two compatibility rules of Definition \ref{def:kut}, but where 
 the second labeling rule of Definition \ref{def:cut} is relaxed as follows:
 \begin{itemize}
  \item[2')] a cut-arc containing a univalent vertex in a region of the $i$-th component of $\CC$, is labeled by a region of this $i$-th component. 
  In figures, such internal vertices are represented by a $\oslash$, see Figure \ref{fig:whitehead}  for an example.
  \end{itemize}
 We say that $\CC$ is a \emph{reduced cut-concordance} between $\UU_0$ and $\UU_1$.
\end{defi}
Figure \ref{fig:whitehead} shows that the Whitehead link is reduced cut-concordant to the unlink. 
\begin{figure}[h]
  \includegraphics[scale=0.7]{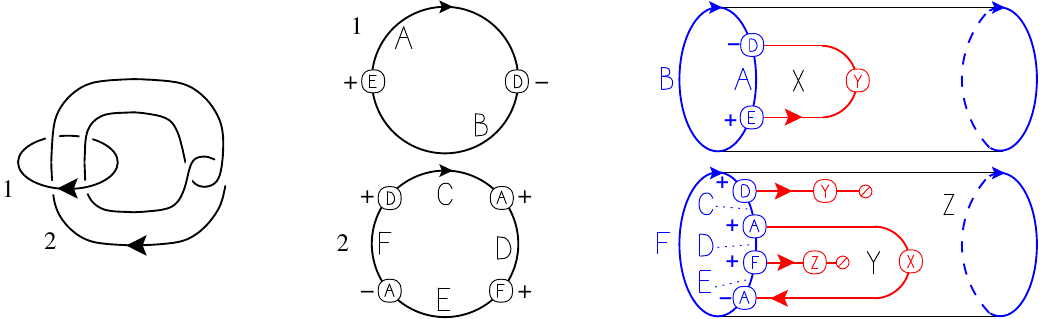}
  \caption{A diagram of the Whitehead link (left), the associated cut-diagram (center) and a reduced cut-concordance between this cut-diagram and the empty one (right)}
  \label{fig:whitehead}
\end{figure}
Notice that the cut-arcs incident to the $\oslash$-vertices are not labeled by the region containing them. As a matter of fact, the Whitehead link is not cut-concordant to the unlink: an obstruction is for example given by Milnor invariant $\mu(1122)$, see Remark \ref{rem:ilnor}. 

 \begin{lem}\label{lem:svtocut}
  Two $1$--dimen\-sio\-nal cut-diagrams associated to sv-equivalent welded tangles are reduced cut-concordant.
 \end{lem}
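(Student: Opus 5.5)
Sv-equivalence is generated by welded Reidemeister moves and SV moves, and reduced cut-concordance is an equivalence relation. So it suffices to treat each generating move separately, exactly as in the proof of Lemma \ref{lem:mings}. Welded Reidemeister moves give equivalent cut-diagrams, by the bijection between welded tangles and equivalence classes of cut-diagrams. Lemma \ref{lem:mings} then gives a cut-concordance, and any cut-concordance is a fortiori a reduced cut-concordance, since rule 2) implies rule 2'). Transitivity comes from gluing two reduced cut-concordances along $X\times\{1\}$ and $X\times\{0\}$. The Orientation rule makes the cut-arc endpoints match up with compatible orientations. The Labeling rule makes the labels agree across the gluing after merging the regions identified by the $\phi_\e$. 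Rescaling then gives a cut-diagram over $X\times[0,1]$, and relaxed rule 2') is clearly preserved.

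The main step is therefore a single SV move. Here two virtual diagrams differ by replacing one classical crossing between two strands of the $i$-th component with a virtual one. On cut-diagrams, $\UU_0$ and $\UU_1$ differ only in that $\UU_0$ has one extra cut-point $p$ on $X_i$, with sign $\e$ and label some region $B$ of the same $i$-th component. On the same component, the region containing $p$ in $\UU_1$ splits into $A$ and $A'$ on either side of $p$ in $\UU_0$. The labels of the other cut-points agree, except that labels equal to $A$ or $A'$ in $\UU_0$ both become the merged region in $\UU_1$.

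The construction I propose is: take the product cut-diagram $(\UU_0\setminus\{p\})\times[0,1]$ on $X\times[0,1]$. Add a single short cut-arc starting at $p\times\{0\}$ and ending at a univalent interior vertex at height $1/2$. Orient it according to the Orientation rule, and label it by the region of $\UU$ containing $B\times\{0\}$, which lies on the $i$-th component. Above height $1/2$, the regions $A\times[0,1]$ and $A'\times[0,1]$ are merged into a single region of $\UU$. This is consistent with $\UU_1$, since in $\UU_1$ these regions are merged.

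Then I would check the conditions. The Labeling rule holds at both ends: at $\e=0$ every cut-point keeps its label via $\phi_0$, and at $\e=1$ the merging of $A$ and $A'$ matches $\phi_1$. Labeling rule 1) is vacuous, since there are no crossings. The only univalent vertex is labeled by a region of the $i$-th component, so the relaxed rule 2') holds, although rule 2) may fail. This failure is precisely why only a reduced cut-concordance is obtained. The boundary condition over $\partial X\times[0,1]$ holds because nothing changes near $\partial X$.

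I expect the main subtlety to be bookkeeping of regions. One must check that regions of $\UU$ are exactly the products of regions of $\UU_1$, with $A$ and $A'$ joined around the endpoint of the short arc. The product cut-arcs keep their labels, so one must verify that $\phi_0$ and $\phi_1$ are well defined and make the diagrams commute. The same picture is used read in reverse (swapping $\UU_0$ and $\UU_1$) for an SV move creating a classical crossing.
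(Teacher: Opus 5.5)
Your proposal is correct and follows essentially the paper's proof: you reduce to generating moves, handle the welded moves via Lemma~\ref{lem:mings}, and realize the SV move by a local reduced cut-concordance in which the cut-arc issuing from the removed cut-point ends at a $\oslash$-vertex labeled by a region of the same component. You spell out the product construction, the checks of the compatibility rules, and the gluing argument for transitivity more explicitly than the paper, which only draws the local picture and takes transitivity for granted.
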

 \begin{proof}
This can be shown in a similar way as Lemma \ref{prop:wctocut}. 
But since sv-equivalence is generated by local moves only, we can also proceed as in the proof of Lemma \ref{lem:mings}. Indeed, having already given in Figure \ref{fig:svmoves} (portions of) reduced cut-concor\-dan\-ces realizing the topological moves R1, R2 and R3, it only remains to do so for the SV move. This is done in the following figure: 
\[  \includegraphics[scale=0.67]{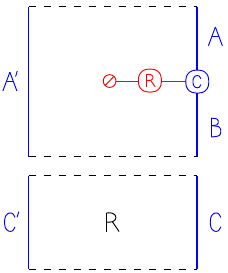} \]
Here, the two represented sheets belong to the same connected component. 
\end{proof}

\subsubsection{Reduced peripheral system}\label{sec:redsys}

The \emph{reduced peripheral system} of cut-diagrams was introduced in \cite{AMY}, building on Milnor's work in the link case \cite{Milnor}. 
We briefly review this notion below, restricting ourselves to the case of a cut-diagram $\UU$ over a $1$--manifold $X$.  

The \emph{reduced group} of a cut-diagram $\UU$ is the quotient $\nR G(\UU)$ of $G(\UU)$ by the normal subgroup generated by all relations $[R,R^g]$, for all meridians $R$ and all $g\in G(\UU)$. 
In the notation of Theorem \ref{thm:alaChen}, we have the following presentation : 
\begin{equation}\label{eq:late}
\nR G(\UU) \cong \left\langle R_1,\ldots,R_n\ \ \left|
  \begin{array}{l}
    \big[R_i,R_i^g\big]\textrm{ for all $i$ and all $g\in F$}\\[.1cm]
    \big[R_i,\eta_q(\lambda_{i})\big]\textrm{ for all $i$ such that $X_i\cong S^1$} 
  \end{array}
\right.\right\rangle.
\end{equation}
\noindent (See \cite[Thm.~4.7]{AMY} for the $2$--dimensional case and \cite[Prop.~18.3.21]{AMY_w} for the $1$--dimensional case.)

\begin{defi}
A \emph{reduced peripheral system} for $\UU$ is the data 
\[ \big( \nR G(\UU),\{R_i,\lambda_{i}\cdot N_i\}\big), \]
associated with a peripheral system 
$\big(G(\UU);\{R_i,\lambda_{i}\}\big)$, 
where for each $i$, $\lambda_i\cdot N_i$ denotes the coset of $\lambda_i$ with
respect to the normal subgroup $N_i$ generated by $R_i$. 
Equivalent reduced peripheral systems are defined as in Definition \ref{def:pereiferic}. 
\end{defi}
As an analogue of Theorem \ref{th:NilpotentConcordance}, we have the following. 
\begin{theo}\label{thm:redisdead}
Equivalence classes of reduced peripheral systems for $1$--dimensional cut-diagrams are invariant under reduced cut-concordance.
\end{theo}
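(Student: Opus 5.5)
The plan is to mimic the proof of Theorem~\ref{th:NilpotentConcordance} step by step, replacing nilpotent quotients by reduced groups and keeping track of where the relaxed labeling rule 2') enters. Let $\UU$ be a reduced cut-concordance between $\UU_0$ and $\UU_1$. Although $\UU$ is no longer a genuine cut-diagram, we still define its reduced group $\nR G(\UU)$: generators are the regions, and relations are the adjacency relations along cut-arcs together with all $[R,R^g]$. The first key observation is that a cut-arc ending at a $\oslash$-vertex inside a region $A$ of the $i$-th component, labeled by some other region $B$ of that same component, imposes the relation $A=B^{-1}AB$. Modulo the reduced relations this becomes harmless, since $A$ and $B$ are conjugate $i$-th meridians and hence commute in $\nR G$. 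So in the reduced setting the relaxed rule 2') costs nothing, and we expect every ingredient used before to survive.

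Next we check that the local moves T1, T3, T5 used in Section~\ref{sec:ool}, together with the boundary trick of Figure~\ref{fig:trick} and the duplication process of Figure~\ref{fig:theonlyidea}, induce isomorphisms of reduced groups. These moves are performed away from the $\oslash$-vertices, so Proposition~\ref{prop:iniscent} still applies in spirit. After these modifications, $\phi_\e$ is a bijection onto the regions adjacent to $\UU_\e$. We then take compatible road networks $\alpha_\e(\UU)$ as before, so Claim~\ref{claim:entine} holds verbatim.

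We then need a reduced analogue of Theorem~\ref{thm:alaChen} for $\UU$: a presentation of $\nR G(\UU)$ as in \eqref{eq:late}, with the loop-longitudes rewritten via Chen maps $\eta_q$ for $q$ large enough. Such a $q$ exists because the reduced free group $\nR F$ is nilpotent (of class at most $n$), and the $\oslash$-relations are absorbed as explained above. We plan to establish this by rerunning the argument of \cite[Thm.~4.7]{AMY}, inserting the extra relations, which become trivial after the Chen rewriting. For compatible road networks, pushed-in copies of the longitudes of $\UU_\e$ give identical presentations, and this yields isomorphisms $\nR G(\UU_0)\cong \nR G(\UU)\cong \nR G(\UU_1)$.

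Finally, we compare peripheral data through $\rho=\rho_1^{-1}\circ\rho_0$. As in Section~\ref{sec:on}, Lemma~\ref{lem:rewrite} gives $\rho(R_i^0)=g_i^{-1}R_i^1g_i$, and Lemma~\ref{eq:alaChen} relates the loop- and arc-longitudes. Both lemmas only hold modulo $F_q$ and the longitude relations; in the reduced setting we only need the longitudes modulo $N_i$, which is weaker and therefore fine. Homotopy invariance is the main obstacle: a path crossing a $\oslash$-arc changes $w_\gamma$ by a meridian of the same component, conjugated. Such a change is invisible in the coset $\lambda_i\cdot N_i$ when the arc belongs to the $i$-th component. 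If the arc belongs to another component $j$, its label lies in component $j$ and the usual relation holds up to reduced commutation. Making this precise, that is, proving a reduced version of Lemma~\ref{eq:alaChen} for reduced cut-concordances, is the step we expect to require the most care.
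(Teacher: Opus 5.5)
Your proposal is correct and is essentially the paper's proof. You rerun the argument for Theorem \ref{th:NilpotentConcordance} with (\ref{eq:late}) in place of Theorem \ref{thm:alaChen}, and the one genuinely new ingredient is homotopy invariance modulo $F_q\cdot V_{(q)}\cdot N_i$ (Lemma \ref{lem:bienlasoupe}); the paper proves that lemma exactly by your observation that sliding a path on $\Sigma_i$ across a $\oslash$-vertex inserts a letter $A^{\pm1}$ with $A$ a region of $\Sigma_i$ (and shifts the normalizing power of the starting region), so the Chen image changes only by elements of the normal subgroup $N_i$.

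Your remark that the $\oslash$-arc relations $[A,B]$ die in the reduced group usefully justifies applying (\ref{eq:late}) to the relaxed diagram $\UU$, which the paper leaves implicit. Your ``other component $j$'' case is vacuous, since a path on $\Sigma_i$ only meets cut-arcs lying on $\Sigma_i$, and by rule 2') those ending at $\oslash$-vertices are labeled by regions of $\Sigma_i$.
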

\begin{proof}
The proof follows very closely that of Theorem \ref{th:NilpotentConcordance}. 
More precisely, using (\ref{eq:late}) instead of Theorem \ref{thm:alaChen}, the first half of the proof (Section \ref{sec:ool}) applies essentially verbatim to show that two reduced cut-concordant $1$--dimensional cut-diagrams have isomorphic reduced groups. 
The second half of the proof, showing that this isomorphism induces an equivalence of reduced peripheral systems, follows the same line as Section \ref{sec:on}. The only significant difference here is a \lq reduced\rq\, version of the homotopy invariance Lemma \ref{eq:alaChen}, stated in Lemma \ref{lem:bienlasoupe} below. 
\end{proof}
We make use of the notation of Lemma \ref{eq:alaChen} for this reduced version:  
\begin{lem}\label{lem:bienlasoupe}
Let $\alpha$ be  any choice of road network for 
a ($2$--dimensional) cut-diagram over a surface $\Sigma$. 
If $\gamma$ and $\gamma'$ are two homotopic paths on the $i$-th component $\Sigma_i$ of $\Sigma$, relative to boundary, then 
$\eta_{q}(w_\gamma)\equiv \eta_{q}(w_{\gamma'})\mod F_{q}\!\cdot\!V_{(q)}\!\cdot\!N_i$, 
where $N_i$ denotes the normal subgroup of $F$ generated by $R_i$.
\end{lem}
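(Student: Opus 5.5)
We would prove Lemma \ref{lem:bienlasoupe} by mimicking the proof of Lemma \ref{eq:alaChen} (\cite[Prop.~6.8]{AMY}), keeping track of where the relaxed labeling rule 2') enters. A homotopy rel boundary between $\gamma$ and $\gamma'$ on $\Sigma_i$ can be made generic with respect to the cut-diagram, hence decomposed into a finite sequence of elementary moves of the path: (a) creation or cancellation of a pair of consecutive intersection points with the same cut-arc, (b) passing the path across a crossing of the diagram, and (c) passing the path across an internal (univalent) vertex of a cut-arc. It therefore suffices to check that each elementary move changes $\eta_q(w_\gamma)$ only by an element of $F_q\cdot V_{(q)}\cdot N_i$.

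Moves (a) and (b) involve only the labeling rule 1) and the free cancellation $R^{\e}R^{-\e}$. So their effect on $\widetilde w_\gamma$ is identical to the non-reduced case, and the argument of \cite[Prop.~6.8]{AMY} gives invariance modulo $F_q\cdot V_{(q)}$. For (a) there is nothing to check. For (b), the word changes by a relation of $G$ of the form $B^{-1}C^{-1}AC$. Its Chen image is trivial modulo $F_q\cdot V_{(q)}$, by the standard property of Chen maps from \cite[Section~6]{AMY}. We would also verify that the normalising prefactor $R^{-|\gamma|}$ is unchanged: moves (a) and (b) preserve the total exponent of letters from the $i$-th component.

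The only new case is move (c), which is the main point. Here $\widetilde w_\gamma$ changes by inserting or deleting a single letter $L^{\pm1}$, where $L$ labels a cut-arc with a univalent vertex lying in $\Sigma_i$. Under rule 2), $L$ equals the ambient region, and the old argument cancels it against the normalising exponent. Under rule 2'), $L$ is only some region of the $i$-th component. The exponent $|\gamma|$ still changes by $\pm1$ compatibly, since $L$ lies on the same component as the starting region. Thus $w_\gamma$ changes by inserting some factor $R^{\mp1}$ together with $L^{\pm1}$ at an intermediate position. Applying $\eta_q$, we get $\eta_q(L)=\eta_{q-1}(v_L)^{-1}R_i\,\eta_{q-1}(v_L)$, a conjugate of $R_i$, so it lies in $N_i$. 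The region $R$ is also on $\Sigma_i$, so $\eta_q(R)$ is a conjugate of $R_i$ as well. Because $N_i$ is normal, inserting conjugates of $R_i^{\pm1}$ anywhere in a word does not change its class modulo $N_i$. Hence $\eta_q(w_\gamma)\equiv\eta_q(w_{\gamma'})$ modulo $F_q\cdot V_{(q)}\cdot N_i$.

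The expected obstacle is bookkeeping rather than substance. We must make sure the normalisation $R^{-|\gamma|}$ in $w_\gamma$ is placed consistently, since it sits at the front while the inserted letter sits in the middle. This is harmless precisely because we work modulo the normal subgroup $N_i$: all letters from component $i$ map to conjugates of $R_i$ and can be moved around freely. Concatenating the elementary moves then yields the lemma.
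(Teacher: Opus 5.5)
Your decomposition of the homotopy into elementary moves matches the paper's proof. Your treatment of the $\oslash$-move (c) is correct: observing that $\eta_q$ sends every region of $\Sigma_i$ into $N_i$ is a cleaner way to do the paper's commutator computation. The gap is in move (b). You assert that the Chen image of a relation $B^{-1}C^{-1}AC$ is trivial modulo $F_q\cdot V_{(q)}$ ``by the standard property''. That property is proved for genuine cut-diagrams by induction on $q$, and the induction does not survive rule 2'). Here $\eta_q(B)$ and $\eta_q(C^{-1}AC)$ are conjugates of $R_j$ by $\eta_{q-1}(v_B)$ and $\eta_{q-1}(v_A)\eta_q(C)$, where $\Sigma_j$ is the component carrying $A$ and $B$ (possibly $j\neq i$). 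These conjugators are compared via homotopy invariance, at level $q-1$, of paths on $\Sigma_j$. With $\oslash$-vertices on $\Sigma_j$, that invariance only holds modulo $N_j$ (your own analysis of (c), applied on $\Sigma_j$). Conjugating $R_j$ by an element of $N_j$ is not trivial modulo $F_q\cdot V_{(q)}\cdot N_i$. It does not help that the move is local on $\Sigma_i$ and uses only rule 1), because the Chen images of the letters depend on the roads of other components.

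Here is a concrete counterexample.
\begin{itemize}
\item Take four disks $\Sigma_i,\Sigma_j,\Sigma_k,\Sigma_m$, with $\Sigma_k$ and $\Sigma_m$ carrying no diagram, so their unique regions are $R_k$ and $R_m$.
\item On $\Sigma_j$, put two disjoint chords labeled $R_k$ and $R_m$, cutting it into regions $Q$, $P=R_j$ and $B$. Add a $Q$-labeled arc inside $P$ running from a $\oslash$-vertex to $\partial\Sigma_j$.
\item On $\Sigma_i$, put one crossing with over-arc labeled $R_m$ and under-arcs labeled $P$ and $B$. Rule 1) holds thanks to the $R_m$-chord on $\Sigma_j$.
\item Choose roads with $v_Q=R_k^{s}$ and $v_B=Q^{\sigma}R_m^{\varepsilon}$, so the road to $B$ goes around the $\oslash$-vertex.
\end{itemize}
For $q\ge 3$ one gets $\eta_q(B)=R_m^{-\varepsilon}Y^{-1}R_jYR_m^{\varepsilon}$ with $Y=(R_k^{-s}R_jR_k^{s})^{\sigma}$. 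Take the two paths on $\Sigma_i$ passing on either side of the crossing. They give $w_\gamma=P^{a}R_m^{\varepsilon}$ and $w_{\gamma'}=R_m^{\varepsilon}B^{a}$, hence $\eta_q(w_\gamma)\,\eta_q(w_{\gamma'})^{-1}=[R_j^{-a},Y]$. The image of this element in $F_3/F_4$ is $[R_j,[R_j,R_k]]^{\pm1}$, even after killing $R_i$, and $V_{(q)}$ is trivial on disks. So the congruence fails for every $q\ge 4$.

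This element is, however, a consequence of the reduced relations $[R_j,g^{-1}R_jg]$. The repair is to work modulo $F_q\cdot V_{(q)}\cdot N_i\cdot\mathcal{K}$, where $\mathcal{K}$ is the normal closure of all $[R_l,g^{-1}R_lg]$. This costs nothing for Theorem \ref{thm:redisdead}, since these relations are already in (\ref{eq:late}). You then run the induction on $q$ simultaneously over all components. An error $n\in N_j$ at level $q-1$ becomes harmless, because $n^{-1}R_jn\equiv R_j$ modulo $\mathcal{K}$.

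The paper's proof takes exactly the same shortcut: it invokes Lemma \ref{eq:alaChen}, proved for genuine cut-diagrams, for the non-$\oslash$ moves. So this gap is inherited from the source, but as written your step (b) does not go through.
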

\begin{proof}
By \cite[Lem.~2.6]{AMY}, the result follows from Lemma \ref{eq:alaChen} and the fact that the desired equality holds when the two paths differ by insertion/deletion of a loop going around a $\oslash$-vertex of $\UU$.  
We thus consider the case where $\gamma$ and $\gamma'$ differ locally as illustrated below, with $\gamma=\gamma_1\cdot\gamma_2$ on the left-hand side of the move. 
\[
\dessin{2cm}{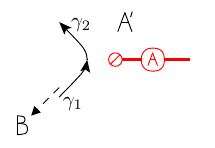} \raisebox{.2cm}{$\longleftrightarrow$} \dessin{2cm}{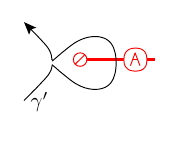}.
\]
Here, $B$ is the region of $\Sigma_i$ where $\gamma$ starts, $A$ and $A'$ being two regions of $\Sigma_i$. 
We then have  
\[w_{\gamma}= B^{s}\widetilde{w}_{\gamma_1}\widetilde{w}_{\gamma_2}
\quad\textrm{and}\quad 
w_{\gamma'}=B^{s-\varepsilon}\widetilde{w}_{\gamma_1}A^{\varepsilon}\widetilde{w}_{\gamma_2},\] 
for some $s\in \Z$ and some $\varepsilon={\pm 1}$. 
Denoting by $\alpha_B$ and $\alpha_A$ the roads of $\alpha$ running to regions $B$ and $A$, respectively, we thus have by definition of the Chen maps: 
\begin{eqnarray*}
\eta^\alpha_{q+1}(w_{\gamma'}) & 
= &
\eta^\alpha_{q}(v_B)^{-1}R_{i}^{s-\varepsilon}\eta^\alpha_{q}(v_B) 
\eta^\alpha_{q+1}(\widetilde{w}_{\gamma_1})
\eta^\alpha_{q}(v_A)^{-1}R_{i}^{\varepsilon}\eta^\alpha_{q}(v_A) 
 \eta^\alpha_{q+1}(\widetilde{w}_{\gamma_2}) \\
  & = & 
  \eta^\alpha_{q}(v_B)^{-1}R_{i}^{s}\left[ R_i^{\varepsilon} , \eta^\alpha_{q}(v_A) \eta^\alpha_{q+1}(\widetilde{w}_{\gamma_1}^{-1}) \eta^\alpha_{q}(v_B^{-1}) \right] \eta_{q}(v_B)\eta^\alpha_{q+1}(\widetilde{w}_{\gamma_1}\widetilde{w}_{\gamma_2}) \\
  & \equiv & \eta^\alpha_{q}(v_B)^{-1}R_{i}^{s}\eta_{q}(v_B)\eta^\alpha_{q+1}(\widetilde{w}_{\gamma_1}\widetilde{w}_{\gamma_2}) = \eta^\alpha_{q+1}(w_{\gamma}) \mod N_i 
\end{eqnarray*}
This concludes the proof.
\end{proof}

A consequence of the invariance Theorem \ref{thm:redisdead} is that the converse of Lemma \ref{lem:svtocut} also holds: 
\begin{cor}\label{lem:cuttosv}
Let $D$ and $D'$ be two welded tangles, and let $\UU_D$ and $\UU_{D'}$ be $1$--dimensional cut-diagrams associated to $D$ and $D'$, respectively.
Then $D$ and $D'$ are sv-equivalent if and only if $\UU_D$ and $\UU_{D'}$ are reduced cut-concordant. 
\end{cor}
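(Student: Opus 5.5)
The \lq only if\rq\ direction is exactly Lemma~\ref{lem:svtocut}, so the work lies in the converse. Assume $\UU_D$ and $\UU_{D'}$ are reduced cut-concordant. By Theorem~\ref{thm:redisdead}, they then have equivalent reduced peripheral systems. For $1$--dimensional cut-diagrams these are the reduced peripheral systems of the welded tangles $D$ and $D'$: the group $G(\UU_D)$ is the welded tangle group, and meridians and longitudes correspond under the Gauss diagram dictionary of Remark~\ref{rem:kutGD}. The plan is therefore to invoke a known classification result for welded tangles up to sv-equivalence in terms of reduced peripheral data, and to check that the data appearing there coincides with the data controlled by Theorem~\ref{thm:redisdead}.

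For welded string links this classification is the result of \cite{ABMW}. There, welded string links up to sv-equivalence are classified by the induced conjugating automorphism of the reduced free group $\nR F$, which amounts to the reduced peripheral system: the reduced quotient of the string link group is canonically $\nR F$ with meridians $R_i$, and the longitudes $\lambda_i$ mod $N_i$ determine the automorphism $R_i\mapsto \lambda_i^{-1}R_i\lambda_i$. For welded links and general welded tangles I would use the classification of \cite{AMY_w}, which holds up to sv-equivalence by reduced peripheral systems (cf. \cite[Prop.~18.3.21]{AMY_w}, already quoted for (\ref{eq:late})). So the first step is to fix the skeleton $X$ and recall the appropriate classification statement. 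The second step is to translate the equivalence of reduced peripheral systems from Definition~\ref{def:pereiferic}, with the conjugating elements $g_i$ and longitude cosets $\lambda_i N_i$, into exactly the equivalence notion used in those classification theorems.

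The main obstacle is this matching of equivalence notions. In the string link case, Definition~\ref{def:pereiferic} requires $\rho(\lambda_i)=\lambda'_i$ on the nose modulo $N_i$, with the basepoints canonical. I expect this to give an isomorphism $\nR G(\UU_D)\cong\nR F\cong \nR G(\UU_{D'})$ preserving each $R_i$ up to the conjugations $g_i$. I then need these to yield the same element of the reduced automorphism group. I would handle this by first showing that the $g_i$ can be absorbed: for interval components, both meridians and longitudes are read from the canonical first region, so the compositions with the identification to $\nR F$ agree. For circle components, conjugation ambiguity is precisely what the classification of welded links allows, since it involves the peripheral system up to conjugation. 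Once the invariants agree, the classification theorems give sv-equivalence of $D$ and $D'$, completing the converse.
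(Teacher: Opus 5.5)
Your argument is the paper's: Lemma~\ref{lem:svtocut} gives the \lq only if\rq\ part, and Theorem~\ref{thm:redisdead} followed by the classification of welded tangles up to sv-equivalence by reduced peripheral systems gives the converse. The right reference is \cite[Thm.~18.3.22]{AMY_w}; Prop.~18.3.21 is only the presentation (\ref{eq:late}). Since that theorem is stated in terms of reduced peripheral systems for every skeleton, string links included, no translation into the automorphism picture of \cite{ABMW} is needed.

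If you do take that detour, your claim that the conjugators $g_i$ on interval components can be absorbed is false for arbitrary $g_i$. Take three interval components with $\lambda_1=R_3$ and $\lambda_2=\lambda_3=1$. Conjugation by $R_2$ is then an equivalence, in the sense of Definition~\ref{def:pereiferic}, with the system $\lambda'_1=R_3^{R_2}$, $\lambda'_2=\lambda'_3=1$. Yet $R_1\mapsto R_1^{R_3}$ and $R_1\mapsto R_1^{R_3^{R_2}}$ (fixing $R_2,R_3$) are different automorphisms of $\nR F$. What you actually need is $g_i=1$ for intervals. The proof of Theorem~\ref{thm:redisdead} does deliver this: after the modification of Figure~\ref{fig:trick}, $R^0_i$ and $R^1_i$ are the same region $R'$ containing $\{0\}\times[0,1]$.
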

\begin{proof}
The \lq only if\rq\, part of the statement corresponds to Lemma \ref{lem:svtocut}. 
For the \lq if\rq\, part,  recall that two reduced cut-concordant $1$--dimensional cut-diagrams have equivalent reduced peripheral systems by Theorem \ref{thm:redisdead}.  
The result follows, since two welded tangles are sv-equivalent if and only if they have equivalent reduced peripheral systems, as proved in \cite[Thm.~18.3.22]{AMY_w}.
\end{proof}

\begin{rem}\label{rem:koolas}
The classification result \cite[Thm.~18.3.22]{AMY_w} was first established in \cite[Thm.~2.1]{AM} in the link case, and in \cite[Thm.~3.11]{ABMW} for string links (see also \cite[\S~9.2]{arrow}).\\
This result also implies that link-homotopy classes of (classical) tangles inject into welded tangles up to sv-equi\-va\-lence (see the bottom-left arrow in the commutative diagram given in the introduction), since the reduced peripheral system is invariant under link-homotopy. 
\end{rem}

\begin{rem}\label{lem:niscatedebernoulli}
It is well-known that concordance implies link-homotopy for classical knotted objects in $3$-space \cite{Giffen,Goldsmith}.
We observe that, likewise, cut-concordance implies reduced cut-concor\-dan\-ce, since a cut-concordance is merely a reduced one with only \lq ordinary\rq\, internal vertices (that is, a reduced cut-concordance with no $\oslash$-vertex). 
Combining this observation with Lemmas \ref{prop:wctocut} and Corollary \ref{lem:cuttosv}, directly implies that welded concordance implies sv-equivalence.
\end{rem}

\bibliographystyle{abbrv}
\bibliography{References}

\end{document}